\newtheorem{theorem}{Theorem}
\newtheorem{lemma}[theorem]{Lemma}
\newtheorem{proposition}[theorem]{Proposition}
\newcommand{\tto}{\twoheadrightarrow}
\font\sc=rsfs10
\newcommand{\cG}{\sc\mbox{G}\hspace{1.0pt}}
\begin{document}

\title[Categorification of the Catalan monoid]{Categorification of the Catalan monoid}
\author{Anna-Louise Grensing and Volodymyr Mazorchuk}

\begin{abstract}
We construct a finitary additive $2$-category whose Grothendieck ring is isomorphic to the
semigroup algebra of the monoid of order-decreasing and order-preserving transformations of
a finite chain.
\end{abstract}

\maketitle

\begin{center}
{\em Dedicated to the memory of John Howie}
\end{center}

\section{Introduction and description of the results}\label{s1}

For a positive integer $n$ consider the set $\mathbf{N}_n:=\{1,2,\dots,n\}$ which is linearly ordered
in the usual way. Let $\mathcal{T}_n$ be the full transformation monoid on $\mathbf{N}_n$, that is the
set of all total maps $f:\mathbf{N}_n\to \mathbf{N}_n$ with respect to composition
(from right to left). Let $\mathcal{C}_n$ denote the submonoid of $\mathcal{T}_n$ consisting of all 
maps which are:
\begin{itemize}
\item {\em order-decreasing} in the sense that $f(i)\leq i$ for all $i\in \mathbf{N}_n$;
\item {\em order-preserving} in the sense that $f(i)\leq f(j)$ for all $i,j\in \mathbf{N}_n$ such
that $i\leq j$.
\end{itemize}
Elements of $\mathcal{C}_n$ are in a natural bijection with lattice paths from $(0,0)$ to $(n,n)$ which
remain below the diagonal. The bijection is given by sending $f\in\mathcal{C}_n$ to the path in which 
for every $i\in\{1,2,\dots,n-1\}$ the maximal $y$-coordinate for a path point having the $x$-coordinate $i$
equals $f(i+1)-1$. Hence $|\mathcal{C}_n|=\mathrm{C}_n:=\frac{1}{n+1}\binom{2n}{n}$ is 
$n$-th Catalan number (see \cite{Hi}).
Because of this, $\mathcal{C}_n$ is usually called the {\em Catalan monoid},
see \cite{So,MS}, (some other names are ``the monoid of non-decreasing parking functions'' or, simply, 
``the monoid of order-decreasing and order-preserving'' maps). The monoid $\mathcal{C}_n$ is a classical 
object of combinatorial semigroup theory, see e.g. \cite{Hi,Ho,So,LU,GM,DHST,AAK} and references therein.

For $i=1,2,\dots,n-1$ let $\alpha_i$ denote the element of $\mathcal{C}_n$ defined as follows:
\begin{displaymath}
\alpha_i(j):=\begin{cases}j-1,& j=i+1;\\j,&\text{otherwise}.\end{cases}
\end{displaymath}
It is easy to check that the $\alpha_i$'s satisfy the following relations:
\begin{equation}\label{eq1}
\alpha_i^2=\alpha_i;\quad\quad
\alpha_i\alpha_{i+1}\alpha_i=\alpha_{i+1}\alpha_i\alpha_{i+1}=\alpha_i\alpha_{i+1};\quad\quad
\alpha_i\alpha_j=\alpha_j\alpha_i, |i-j|>1.
\end{equation}
Moreover, in \cite{So} it is shown that this gives a presentation for $\mathcal{C}_n$
(see also \cite{GM2} for a short argument). This means that 
$\mathcal{C}_n$ is a {\em Kiselman quotient} of the {\em $0$-Hecke monoid} of type $A_{n\text{-}1}$ as defined in
\cite{GM2}. The middle relation in \eqref{eq1} is the defining relation for {\em Kiselman semigroups},
see \cite{KM}.

The combinatorial datum defining a Kiselman quotient $\mathbf{KH}_{\Gamma}$ of a $0$-Hecke monoid 
$\mathbf{H}_{\Gamma}$ is given by a finite quiver $\Gamma$. In the case when $\Gamma$ does not 
contain oriented cycles, it was shown in \cite{Pa,Gr} that there is a weak functorial action of 
$\mathbf{KH}_{\Gamma}$ on the category of modules of the path algebra of $\Gamma$, i.e. there exist endofunctors on
this module category which satisfy the defining relations of $\mathbf{KH}_{\Gamma}$ (up to isomorphism
of functors). These are the so-called {\em projection functors} associated to simple modules of the path algebra. In the special case of $\mathcal{C}_n$, we obtain a weak action of $\mathcal{C}_n$ on the category of modules over the path algebra of the following quiver, which we will denote by $\mathbf{Q}=\mathbf{Q}_{n-1}$
(note that the set of vertices of $\mathbf{Q}$ coincides with $\mathbf{N}_{n-1}$):

\begin{equation}\label{eq2}
\xymatrix{
\mathtt{1}\ar[rr]&&\mathtt{2}\ar[rr]&&\mathtt{3}\ar[rr]&&\dots\ar[rr]&&\mathtt{n}\text{-}\mathtt{1}
}
\end{equation}

In the present paper we further develop this idea putting it into the general context of  algebraic 
{\em categorification} as described in e.g. \cite{Ma}. 
Let $\Gamma$ be a finite acyclic quiver. Denote by $\mathcal{A}_{\Gamma}$ the 
$\mathbb{Z}$-linear path category of $\Gamma$ (i.e. objects in $\mathcal{A}_{{\Gamma}}$ are vertices of 
$\Gamma$, morphisms in $\mathcal{A}_{\Gamma}$ are formal $\mathbb{Z}$-linear spans of oriented 
paths in $\Gamma$ and composition is given by concatenation of paths). For an algebraically closed field  
$\Bbbk$ we denote by $\mathcal{A}_{\Gamma}^{\Bbbk}$
the $\Bbbk$-linear version of $\mathcal{A}_{\Gamma}$, that is the version in which scalars are extended to $\Bbbk$. Consider the category 
$\mathcal{A}_{\Gamma}^{\Bbbk}\text{-}\mathrm{mod}$  of left finite dimensional $\mathcal{A}_{\Gamma}^{\Bbbk}$-modules.  (It is equivalent to the category of finite-dimensional $\Gamma$-representations over $\Bbbk$ and we will not distinguish between them.)
Let $\mathcal{X}$ be some small category equivalent to $\mathcal{A}_{\Gamma}^{\Bbbk}\text{-}\mathrm{mod}$ 
(we fix an equivalence between these two categories which allows us to consider all endofunctors 
of $\mathcal{A}_{\Gamma}^{\Bbbk}\text{-}\mathrm{mod}$ as endofunctors of $\mathcal{X}$). Projection functors 
preserve the category of injective $\mathcal{A}_{\Gamma}^{\Bbbk}$-modules. Using the action of projection functors on 
the category of injective modules, we define certain endofunctors $\mathrm{G}_i$
of $\mathcal{A}_{\Gamma}^{\Bbbk}\text{-}\mathrm{mod}$ which turn out to be {\em exact}.
These endofunctors are used to define a finitary and 
additive $2$-ca\-te\-gory $\cG_{\Gamma}$ as follows: The $2$-category $\cG_{\Gamma}$ has one object which we identify with 
$\mathcal{X}$. The set of $1$-morphisms in $\cG_{\Gamma}$ consists of all endofunctors of $\mathcal{X}$, which are 
isomorphic to a direct sum of compositions of the 
$\mathrm{G}_i$'s. The set of $2$-morphisms between any pair 
of $1$-morphisms is given by all natural transformations of functors. For simplicity, set $\cG_{n}:=\cG_{\mathbf{Q}}$. Our main result is the following
claim which reveals a nice interplay between semigroup theory, representation theory, category theory and
combinatorics:

\begin{theorem}\label{thmmain}
Denote by $\mathcal{S}[\cG_n]$ the set of isomorphism classes  of indecomposable $1$-morphisms in $\cG_n$.
\begin{enumerate}[$($a$)$]
\item\label{thmmain.1} Composition of $1$-morphisms defines on $\mathcal{S}[\cG_n]$ the structure of
a semigroup.
\item\label{thmmain.2} There is an isomorphism $\Phi:\mathcal{C}_n\to \mathcal{S}[\cG_n]$ of monoids.
\item\label{thmmain.3} The morphism space of the Grothendieck category $\mathrm{Gr}(\cG_n)$ 
is isomorphic to the integral group algebra $\mathbb{Z}[\mathcal{C}_n]$.
\item\label{thmmain.4} The action of $\mathrm{Gr}(\cG_n)$ on the Grothendieck group
$\mathrm{Gr}(\mathcal{X})$ gives rise to a linear representation of $\mathcal{C}_n$ as constructed 
in \cite[Subsection~3.2]{GM2}.
\end{enumerate}
\end{theorem}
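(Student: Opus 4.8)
The plan is to prove part~\eqref{thmmain.2} directly and in such a way that~\eqref{thmmain.1} falls out as a byproduct, and then to read off~\eqref{thmmain.3} and~\eqref{thmmain.4} on the level of Grothendieck groups. Write $A:=\mathcal{A}_{\mathbf{Q}}^{\Bbbk}$, a (linear) Nakayama algebra whose indecomposable modules are the interval modules $M[a,b]$ with $1\le a\le b\le n-1$. Since each $\mathrm{G}_i$ is exact, the Eilenberg--Watts theorem lets me write $\mathrm{G}_i\cong B_i\otimes_A-$ for an $A$-bimodule $B_i$ that is projective as a right $A$-module; composition of $1$-morphisms then corresponds to tensoring bimodules over $A$, and this class of bimodules is closed under such tensor products. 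Thus $\cG_n$ is equivalent to a $2$-category of bimodules, and $\mathcal{S}[\cG_n]$ is the set of isomorphism classes of indecomposable bimodules occurring in tensor products $B_{i_1}\otimes_A\cdots\otimes_A B_{i_k}$. I would begin by computing each $\mathrm{G}_i$ explicitly on the interval modules, using that it is determined by its (known) action on the injective modules; this is a finite calculation over a Nakayama algebra.

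With these formulas in hand, the first step is to verify that the $\mathrm{G}_i$ satisfy the defining relations~\eqref{eq1} of $\mathcal{C}_n$ up to isomorphism of functors. Because~\eqref{eq1} is a presentation of $\mathcal{C}_n$ (by \cite{So,GM2}), this produces a well-defined assignment $f\mapsto \mathrm{G}_f:=\mathrm{G}_{i_1}\circ\cdots\circ\mathrm{G}_{i_k}$ (with $\mathrm{G}_{\mathrm{id}}:=\mathrm{Id}_{\mathcal{X}}$) for any word $\alpha_{i_1}\cdots\alpha_{i_k}$ representing $f$, together with natural isomorphisms $\mathrm{G}_f\circ\mathrm{G}_g\cong\mathrm{G}_{fg}$. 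Setting $\Phi(f):=[\mathrm{G}_f]$ then gives a candidate monoid homomorphism, provided each $\mathrm{G}_f$ is indecomposable.

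The technical heart of the argument, and the step I expect to be the main obstacle, is to show that every composition $\mathrm{G}_{i_1}\circ\cdots\circ\mathrm{G}_{i_k}$ is indecomposable; equivalently, that the bimodule $B_f$ representing $\mathrm{G}_f$ is indecomposable and that the tensor product of two such bimodules is again one of them. I would establish this by computing the endomorphism ring $\mathrm{End}_{\cG_n}(\mathrm{G}_f)$ of natural transformations and showing it is local, using the explicit module-level description from the previous step to make $\mathrm{G}_f$ rigid enough that its endomorphisms are scalars modulo a nilpotent ideal. Granting this, the composition of indecomposable $1$-morphisms is indecomposable, so $\mathcal{S}[\cG_n]$ is closed under composition, which proves~\eqref{thmmain.1}; moreover every indecomposable $1$-morphism is some $\mathrm{G}_f$, since any $1$-morphism is a direct sum of compositions of the $\mathrm{G}_i$ and Krull--Schmidt applies.

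It remains to see that $\Phi$ is a bijection. Surjectivity onto $\mathcal{S}[\cG_n]$ is exactly the last observation. For injectivity I would distinguish the classes $[\mathrm{G}_f]$ by their action on the Grothendieck group $\mathrm{Gr}(\mathcal{X})\cong\mathbb{Z}^{n-1}$: each $[\mathrm{G}_i]$ acts by an explicit integer matrix built from the idempotent behaviour of $\mathrm{G}_i$, and these matrices realize precisely the operators assigned to the $\alpha_i$ in the representation of \cite[Subsection~3.2]{GM2}. Effectiveness of that representation, which can be checked directly on interval modules, forces $f\mapsto[\mathrm{G}_f]$ to be injective and so completes~\eqref{thmmain.2}; at the same time this matching of operators is exactly the content of~\eqref{thmmain.4}. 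Finally, for~\eqref{thmmain.3}, the morphism space of $\mathrm{Gr}(\cG_n)$ is the split Grothendieck group of the additive category of $1$-morphisms, hence free abelian on $\mathcal{S}[\cG_n]=\{[\mathrm{G}_f]:f\in\mathcal{C}_n\}$; the isomorphisms $\mathrm{G}_f\circ\mathrm{G}_g\cong\mathrm{G}_{fg}$ identify the induced multiplication with that of $\mathcal{C}_n$, yielding a ring isomorphism $\mathrm{Gr}(\cG_n)\cong\mathbb{Z}[\mathcal{C}_n]$.
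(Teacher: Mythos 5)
Your overall architecture matches the paper's: establish the Hecke--Kiselman relations for the $\mathrm{G}_i$ to get a surjection $\mathcal{C}_n\to\mathcal{S}[\cG_n]$, prove that every composition $\mathrm{G}_{\mathtt{i}_1}\circ\cdots\circ\mathrm{G}_{\mathtt{i}_k}$ is indecomposable or zero so that the multisemigroup of indecomposable $1$-morphisms is single-valued, and then use the effectiveness of the induced matrix representation on $\mathrm{Gr}(\mathcal{X})$ (the matrices $M_i$ of \eqref{eq25}, i.e.\ the natural representation of \cite{GM2}) to get injectivity, with \eqref{thmmain.1}, \eqref{thmmain.3}, \eqref{thmmain.4} falling out along the way. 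The bimodule/Eilenberg--Watts packaging is a harmless reformulation of the paper's $\mathrm{Hom}_{\mathcal{A}_{\mathbf{Q}}^{\Bbbk}}((\mathrm{F}_{\mathtt{i}}\,I)^*,{}_-)$ description.

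However, there is a genuine gap at exactly the step you correctly identify as the technical heart. You say you would show $\mathrm{End}_{\ccG_n}(\mathrm{G}_f)$ is local by making $\mathrm{G}_f$ ``rigid enough that its endomorphisms are scalars modulo a nilpotent ideal,'' but this is only a restatement of the claim that $\mathrm{G}_f$ is indecomposable, not an argument for it; no mechanism is supplied. The paper's Proposition~\ref{prop2} does this by an explicit structural observation: since $\mathrm{G}_{\mathbf{i}}$ is left exact it is determined by its restriction to injectives, $\mathrm{G}_{\mathbf{i}}\,I$ is a quotient of the bimodule $I$, and the bimodule $I$ has \emph{simple top} --- each $I_{\mathtt{i}}$ is uniserial with top $L_{\mathtt{1}}$ and the right-action surjections $I_{\mathtt{i}}\tto I_{\mathtt{i}\text{-}\mathtt{1}}$ identify these tops, so the top of the bimodule $I$ is the top of $I_{\mathtt{n}\text{-}\mathtt{1}}$ alone --- whence every nonzero quotient bimodule of $I$ is indecomposable. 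This fact is special to the linearly oriented quiver $\mathbf{Q}$: as Subsection~\ref{s4.3} of the paper notes, the analogous indecomposability already fails for the quiver $\mathtt{1}\to\mathtt{2}\leftarrow\mathtt{3}$, so no generic rigidity or locality argument for compositions of projection-type functors can succeed; you must use the specific shape of $\mathbf{Q}$. Until you supply this (or an equivalent) argument, the semigroup structure in \eqref{thmmain.1} and the well-definedness and injectivity of $\Phi$ in \eqref{thmmain.2} are not established.
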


Theorem~\ref{thmmain} says that $\cG_n$ is a genuine categorification of $\mathcal{C}_n$ in the sense of
\cite{CR,Ro1,Ma}. We refer the reader to \cite{Ro2,MM1,MM2} for further examples of categorification.
In Section~\ref{s2} we give all definitions and constructions with all necessary details.
Theorem~\ref{thmmain} is proved in Section~\ref{s3}. In Section~\ref{s4} we discuss various consequences
of Theorem~\ref{thmmain} and study some related questions. Finally, in Section~\ref{s5} we 
describe the category of $2$-morphisms and construct cell $2$-representations of $\cG_n$.
\vspace{2mm}

\noindent
{\bf Acknowledgment.} A substantial part of the paper was written during a visit of the first author to 
Uppsala University, whose  hospitality is gratefully acknowledged. The visit was supported by the Swedish 
Research Council and the Department of Mathematics, Uppsala University. The second author is partially 
supported by the  Swedish Research Council and the Royal Swedish Academy of Sciences. We thank the referee
for many helpful comments.

\section{The $2$-category $\cG_{\Gamma}$}\label{s2}

In what follows $\Gamma$ denotes a finite acyclic quiver.

\subsection{$\mathcal{A}_{\Gamma}^{\Bbbk}$-modules}\label{s2.1}

A finite dimensional left $\mathcal{A}_{\Gamma}^{\Bbbk}$-module is a covariant $\Bbbk$-linear functor from  
$\mathcal{A}_{\Gamma}^{\Bbbk}$ to $\Bbbk\text{-}\mathrm{mod}$, the category of finite dimensional vector 
spaces over $\Bbbk$. Morphisms of $\mathcal{A}_{\Gamma}^{\Bbbk}$-modules are natural transformations of functors.

For $\mathtt{i}\in\Gamma$ we denote by $P_{\mathtt{i}}$ the indecomposable projective module 
$\mathcal{A}_{\Gamma}^{\Bbbk}(\mathtt{i},{}_-)$. For $\mathtt{j}\in\Gamma$ the value $P_{\mathtt{i}}(\mathtt{j})$ 
is thus the $\Bbbk$-vector space $\mathcal{A}_{\Gamma}^{\Bbbk}(\mathtt{i},\mathtt{j})$ and for a morphism
$f\in \mathcal{A}_{\Gamma}^{\Bbbk}(\mathtt{j},\mathtt{j}')$ we have $P_{\mathtt{i}}(f):=f\circ{}_-$.

We denote by $L_{\mathtt{i}}$ the simple top of $P_{\mathtt{i}}$. This can be understood as follows: 
$L_{\mathtt{i}}(\mathtt{j})$ is zero unless ${\mathtt{i}}={\mathtt{j}}$ and in the latter case 
$L_{\mathtt{i}}(\mathtt{i})=\Bbbk$. All paths except for the identity path at $\mathtt{i}$
are sent by $L_{\mathtt{i}}$ to zero maps. Finally, denote by $I_{\mathtt{i}}$ the indecomposable injective 
envelope of $L_{\mathtt{i}}$. For ${\mathtt{j}}\in\Gamma$ the value $I_{\mathtt{i}}(\mathtt{j})$ is thus 
the $\Bbbk$-vector space $\mathcal{A}_{\Gamma}^{\Bbbk}(\mathtt{j},\mathtt{i})^*$ 
(here $*$ denotes the $\Bbbk$-dual vector space) and for 
$f\in \mathcal{A}_{\Gamma}^{\Bbbk}(\mathtt{j},\mathtt{j}')$ 
we have $I_{\mathtt{i}}(f):=f^*\circ{}_-$.

\subsection{Projection functors}\label{s2.2}

For two modules $M$ and $N$ the {\em trace} $\mathrm{Tr}_M(N)$ of $M$ in $N$ is defined as the sum of images of 
all homomorphisms from $M$ to $N$. If $\varphi:N\to N'$ is a homomorphism, then 
$\varphi$ maps elements from $\mathrm{Tr}_M(N)$
to elements from $\mathrm{Tr}_M(N')$. This means that $\mathrm{Tr}_M$ is naturally a subfunctor of
the identity endofunctor $\mathrm{Id}_{\mathcal{A}_{\Gamma}^{\Bbbk}\text{-}\mathrm{mod}}$.

For $\mathtt{i}\in\Gamma$ the corresponding {\em projection endofunctor} $\mathrm{F}_{\mathtt{i}}$ of
$\mathcal{A}_{\Gamma}^{\Bbbk}\text{-}\mathrm{mod}$ is defined as 
$\mathrm{Id}_{\mathcal{A}_{\Gamma}^{\Bbbk}\text{-}\mathrm{mod}}/\mathrm{Tr}_{L_{\mathtt{i}}}$. As shown in \cite{Pa,Gr},
projection endofunctors preserve both monomorphisms and epimorphisms but they are neither left nor right exact.
The latter property is a problem as it means that projection functors do not induce, in any natural way, any
linear action on the Grothendieck group of $\mathcal{A}_{\Gamma}^{\Bbbk}\text{-}\mathrm{mod}$. To fix this problem
we will consider a slight variation of projection functors.

\subsection{Partial approximations}\label{s2.3}

Note that $\mathcal{A}_{\Gamma}^{\Bbbk}\text{-}\mathrm{mod}$ is a hereditary category and hence any quotient
of an injective object is injective. As projection functors are quotients of the identity functor, it follows
that all projection functors preserve $\mathcal{I}_{\mathcal{A}_{\Gamma}^{\Bbbk}}$, the category of injective
$\mathcal{A}_{\Gamma}^{\Bbbk}$-modules. Consider 
the $\mathcal{A}_{\Gamma}^{\Bbbk}\text{-}\mathcal{A}_{\Gamma}^{\Bbbk}$-bimodule
$P:=\mathcal{A}_{\Gamma}^{\Bbbk}({}_-,{}_-)$. As a left module, $P$ is a projective generator 
of $\mathcal{A}_{\Gamma}^{\Bbbk}\text{-}\mathrm{mod}$.
Let $I:=P^*$ be the corresponding dual bimodule, the injective cogenerator of 
$\mathcal{A}_{\Gamma}^{\Bbbk}\text{-}\mathrm{mod}$. Define
\begin{displaymath}
\mathrm{G}_{\mathtt{i}}:=\mathrm{Hom}_{\mathcal{A}_{\Gamma}^{\Bbbk}}((\mathrm{F}_{\mathtt{i}}\,I)^* ,{}_-).
\end{displaymath}
Then $\mathrm{G}_{\mathtt{i}}$ is a left exact endofunctor of $\mathcal{A}_{\Gamma}^{\Bbbk}\text{-}\mathrm{mod}$.
Furthermore, using adjunction we have the following natural isomorphism:
\begin{displaymath}
\begin{array}{rcl}
\mathrm{G}_{\mathtt{i}}\, I&=&
\mathrm{Hom}_{\mathcal{A}_{\Gamma}^{\Bbbk}}((\mathrm{F}_{\mathtt{i}}\,I)^* ,\mathrm{Hom}_{\Bbbk}(P,\Bbbk))\\
&\cong&\mathrm{Hom}_{\Bbbk}(P\otimes_{\mathcal{A}_{\Gamma}^{\Bbbk}}(\mathrm{F}_{\mathtt{i}}\,I)^* ,\Bbbk)\\
&\cong&\mathrm{Hom}_{\Bbbk}((\mathrm{F}_{\mathtt{i}}\,I)^* ,\Bbbk)\\
&\cong&\mathrm{F}_{\mathtt{i}}\,I,\\
\end{array}
\end{displaymath}
which implies that $\mathrm{G}_{\mathtt{i}}\vert_{\mathcal{I}_{\mathcal{A}_{\Gamma}^{\Bbbk}}}\cong
\mathrm{F}_{\mathtt{i}}\vert_{\mathcal{I}_{\mathcal{A}_{\Gamma}^{\Bbbk}}}$ as $I$ generates 
$\mathcal{I}_{\mathcal{A}_{\Gamma}^{\Bbbk}}$ additively.

The functors $\mathrm{G}_{\mathtt{i}}$ have an alternative realization as partial approximation functors 
considered in \cite{KhMa}. Let $Q_{\mathtt{i}}:=\oplus_{{\mathtt{j}}\neq {\mathtt{i}}}I_{\mathtt{j}}$. 
Given an $\mathcal{A}_{\Gamma}^{\Bbbk}$-module $M$, let $M\hookrightarrow I_M$ be 
some injective envelope of $M$. Denote by $M'$ the intersection of kernels of all morphisms $f:I_M\to Q_{\mathtt{i}}$
satisfying $f(M)=0$. Denote by $M''$ the intersection of kernels of all morphisms $M'\to Q_{\mathtt{i}}$. 
Then the correspondence $M\mapsto M'/M''$ with the natural action on morphisms is functorial (see \cite{KhMa}). 
This functor is called the {\em partial approximation} $\mathrm{H}_{Q_{\mathtt{i}}}$ with respect to 
$Q_{\mathtt{i}}$ and it comes together with a natural  transformation  
$\mathrm{Id}_{\mathcal{A}_{\Gamma}^{\Bbbk}\text{-}\mathrm{mod}}\to \mathrm{H}_{Q_{\mathtt{i}}}$ which
is injective on all submodules of $Q_{\mathtt{i}}^{\oplus k}$. Then \cite[Comparison Lemma]{KhMa} implies that 
$\mathrm{H}_{Q_{\mathtt{i}}}$ and $\mathrm{G}_{\mathtt{i}}$ are isomorphic.
The functors $\mathrm{G}_{\mathtt{i}}$ appear in \cite{Pa} under the
name ``orthogonal functors''.

\subsection{Exactness}\label{s2.6}

A surprising property of the functor $\mathrm{G}_{\mathtt{i}}$ is the following (compare with approximation functors
from \cite{KhMa} and also with Subsection~\ref{s4.5}):

\begin{proposition}\label{exact}
The functor $\mathrm{G}_{\mathtt{i}}$ is exact for every ${\mathtt{i}}\in\Gamma$.
\end{proposition}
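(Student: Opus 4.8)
The plan is to exploit the fact, already recorded in Subsection~\ref{s2.3}, that $\mathrm{G}_{\mathtt{i}}=\mathrm{Hom}_{\mathcal{A}_{\Gamma}^{\Bbbk}}((\mathrm{F}_{\mathtt{i}}\,I)^*,{}_-)$ is left exact. Since a $\mathrm{Hom}$-functor $\mathrm{Hom}_{\mathcal{A}_{\Gamma}^{\Bbbk}}(X,{}_-)$ is exact precisely when $X$ is projective, and since $\mathcal{A}_{\Gamma}^{\Bbbk}\text{-}\mathrm{mod}$ is hereditary (so that only the vanishing of $\mathrm{Ext}^1(X,{}_-)$ is at issue), the entire proposition reduces to a single assertion: the left $\mathcal{A}_{\Gamma}^{\Bbbk}$-module $X:=(\mathrm{F}_{\mathtt{i}}\,I)^*$ is projective. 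I would state this reduction first and then devote the argument to producing $X$ as a submodule of a projective module.

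To exhibit $X$ inside a projective, I would use that $\mathrm{F}_{\mathtt{i}}=\mathrm{Id}_{\mathcal{A}_{\Gamma}^{\Bbbk}\text{-}\mathrm{mod}}/\mathrm{Tr}_{L_{\mathtt{i}}}$ is a quotient of the identity functor, so there is a canonical epimorphism $I\tto \mathrm{F}_{\mathtt{i}}\,I$; because $\mathrm{Tr}_{L_{\mathtt{i}}}$ is a subfunctor of the identity this is a morphism of $\mathcal{A}_{\Gamma}^{\Bbbk}\text{-}\mathcal{A}_{\Gamma}^{\Bbbk}$-bimodules. Applying the exact contravariant $\Bbbk$-duality ${}^*$ turns it into a monomorphism $X=(\mathrm{F}_{\mathtt{i}}\,I)^*\hookrightarrow I^*$ of bimodules, hence in particular of left modules. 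Now $I=P^*$, so $I^*\cong P$ canonically as bimodules, and by the very definition of $P=\mathcal{A}_{\Gamma}^{\Bbbk}({}_-,{}_-)$ the module $I^*\cong P$ is, as a left module, the projective generator of $\mathcal{A}_{\Gamma}^{\Bbbk}\text{-}\mathrm{mod}$. Thus $X$ embeds as a left submodule of a projective module.

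The conclusion is then immediate: in a hereditary category every submodule of a projective module is projective, so $X$ is projective and $\mathrm{G}_{\mathtt{i}}=\mathrm{Hom}_{\mathcal{A}_{\Gamma}^{\Bbbk}}(X,{}_-)$ is exact. As a cross-check one can identify $X$ explicitly: dualizing the decomposition of $\mathrm{F}_{\mathtt{i}}\,I$ one computes $X\cong \mathrm{rad}(P_{\mathtt{i}})\oplus\bigoplus_{\mathtt{j}\neq\mathtt{i}}P_{\mathtt{j}}$ as a left module, and $\mathrm{rad}(P_{\mathtt{i}})$ is projective, being a submodule of $P_{\mathtt{i}}$ in a hereditary category; this recovers the same fact and confirms the codimension-one embedding.

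The only genuinely delicate point I anticipate is the bimodule and duality bookkeeping: one must keep track of the fact that ${}^*$ interchanges the left and right actions, and verify that the left module structure induced on $I^*\cong P$ is indeed the projective-generator structure rather than its right-module counterpart. Once this identification is made correctly, heredity does all the remaining work, and there is no hard estimate or lengthy computation to carry out.
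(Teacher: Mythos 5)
Your proof is correct, but it takes a genuinely different route from the paper's. The paper argues on the ``output side'': heredity reduces exactness to the vanishing of $\mathcal{R}^1\mathrm{G}_{\mathtt{i}}$, which is computed from a two-step injective coresolution $M\hookrightarrow Q_0\tto Q_1$ via the identification $\mathrm{G}_{\mathtt{i}}\vert_{\mathcal{I}_{\mathcal{A}_{\Gamma}^{\Bbbk}}}\cong \mathrm{F}_{\mathtt{i}}\vert_{\mathcal{I}_{\mathcal{A}_{\Gamma}^{\Bbbk}}}$, and then killed by the external fact, cited from \cite{Pa}, that $\mathrm{F}_{\mathtt{i}}$ preserves surjections. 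You instead work on the ``input side'' and prove that the representing object $(\mathrm{F}_{\mathtt{i}}\,I)^*$ is projective: since $\mathrm{Tr}_{L_{\mathtt{i}}}$ is a subfunctor of the identity, $\mathrm{Tr}_{L_{\mathtt{i}}}(I)$ is a subbimodule, the quotient map $I\tto\mathrm{F}_{\mathtt{i}}\,I$ is a bimodule epimorphism, and dualizing gives an embedding $(\mathrm{F}_{\mathtt{i}}\,I)^*\hookrightarrow I^*\cong P$ into the projective generator, so heredity finishes the job. Both arguments lean on heredity in an essential way, but yours needs only that $\mathrm{F}_{\mathtt{i}}$ is a quotient of the identity functor (not that it preserves surjections) and yields a small bonus, namely the explicit and correct identification $(\mathrm{F}_{\mathtt{i}}\,I)^*\cong \mathrm{rad}(P_{\mathtt{i}})\oplus\bigoplus_{\mathtt{j}\neq\mathtt{i}}P_{\mathtt{j}}$; the paper's computation, for its part, meshes more directly with the derived-functor viewpoint of Subsection~\ref{s2.4} and the failure analysis in Subsection~\ref{s4.5}. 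The duality bookkeeping you flag as the delicate point does work out exactly as you anticipate, since $I=P^*$ is finite dimensional and $P^{**}\cong P$ carries the standard left (projective generator) structure.
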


\begin{proof}
By construction,  $\mathrm{G}_{\mathtt{i}}$ is left exact. As $\mathcal{A}_{\Gamma}^{\Bbbk}\text{-}\mathrm{mod}$ is 
hereditary, to prove exactness of $\mathrm{G}_{\mathtt{i}}$ we only have to show that $\mathcal{R}^1\mathrm{G}_{\mathtt{i}}=0$.
Let $M\in \mathcal{A}_{\Gamma}^{\Bbbk}\text{-}\mathrm{mod}$ and $M\hookrightarrow Q_0\tto Q_1$ be an
injective coresolution of $M$. As $\mathrm{G}_{\mathtt{i}}\vert_{\mathcal{I}_{\mathcal{A}_{\Gamma}^{\Bbbk}}}\cong
\mathrm{F}_{\mathtt{i}}\vert_{\mathcal{I}_{\mathcal{A}_{\Gamma}^{\Bbbk}}}$, the module 
$\mathcal{R}^1\mathrm{G}_{\mathtt{i}}\, M$ is isomorphic to the homology of
the sequence $\mathrm{F}_{\mathtt{i}}\, Q_0\to \mathrm{F}_{\mathtt{i}}\, Q_1\to 0$. Since $\mathrm{F}_{\mathtt{i}}$ preserves surjections
(see \cite[Page~9]{Pa}), this homology is zero. The claim follows.
\end{proof}

\subsection{Definition of $\cG_{\Gamma}$}\label{s2.5}

Let $\mathcal{X}$ be some small additive category equivalent to $\mathcal{A}_{\Gamma}^{\Bbbk}\text{-}\mathrm{mod}$.
Fixing such an equivalence we can define the action of $\mathrm{G}_{\mathtt{i}}$ 
(and also of $\mathrm{F}_{\mathtt{i}}$) on $\mathcal{X}$ up to isomorphism of functors. Consider
the $2$-category $\cG_{\Gamma}$ defined as follows: $\cG_{\Gamma}$ has one object, which we identify with $\mathcal{X}$;
$1$-morphisms of $\cG_{\Gamma}$ are all endofunctors of $\mathcal{X}$ which are isomorphic to direct sums of 
functors, each of which is isomorphic to a direct summand in some composition of functors in which 
every factor is isomorphic to some $\mathrm{G}_{\mathtt{i}}$; $2$-morphisms of $\cG_{\Gamma}$ are natural 
transformations of functors. Note that all $\Bbbk$-spaces of $2$-morphisms are finite dimensional.
Furthermore, by definition, the category $\cG_{\Gamma}(\mathcal{X},\mathcal{X})$ is fully additive
and $\cG_{\Gamma}$ is enriched over the category of additive $\Bbbk$-linear categories. Set $\cG_{n}:=\cG_{\mathbf{Q}}$.

\subsection{The $2$-action on the derived category}\label{s2.4}

That the functor $\mathrm{G}_{\mathtt{i}}$ turned out to be exact is accidental. In the general case
(of a non-hereditary algebra) one can only expect $\mathrm{G}_{\mathtt{i}}$ to be left exact. However, 
as the following alternative description shows, this is not a big problem.
Denote by $\mathcal{X}_{\mathcal{I}}$ the full subcategory of injective objects in $\mathcal{X}$.
Let $\mathcal{K}^b(\mathcal{X}_{\mathcal{I}})$ be the bounded homotopy category of the additive category
$\mathcal{X}_{\mathcal{I}}$. Then we have a natural $2$-action of $\cG_{\Gamma}$ on 
$\mathcal{K}^b(\mathcal{X}_{\mathcal{I}})$ defined componentwise.

Note that $\mathcal{K}^b(\mathcal{X}_{\mathcal{I}})$ is a triangulated category which is equivalent
to the bounded derived category $\mathcal{D}^b(\mathcal{A}_{\Gamma}^{\Bbbk})$ of the abelian category 
$\mathcal{A}_{\Gamma}^{\Bbbk}\text{-}\mathrm{mod}$. Via this equivalence the 
action of $\mathrm{G}_{\mathtt{i}}$ on $\mathcal{K}^b(\mathcal{X}_{\mathcal{I}})$ can thus be considered as an action on
$\mathcal{D}^b(\mathcal{A}_{\Gamma}^{\Bbbk})$ and this is exactly the definition of the right derived
functor $\mathcal{R}\mathrm{G}_{\mathtt{i}}$.

For $\mathbf{i}:=({\mathtt{i}}_1,{\mathtt{i}}_2,\dots,{\mathtt{i}}_k)\in\Gamma^k$ consider the compositions
\begin{displaymath}
\mathrm{G}_{\mathbf{i}}:=\mathrm{G}_{{\mathtt{i}}_1}\circ\mathrm{G}_{{\mathtt{i}}_2}\circ\dots\circ
\mathrm{G}_{{\mathtt{i}}_k}\quad\text{ and }\quad
(\mathcal{R}\mathrm{G})_{\mathbf{i}}:=\mathcal{R}\mathrm{G}_{{\mathtt{i}}_1}\circ
\mathcal{R}\mathrm{G}_{{\mathtt{i}}_2}\circ\dots
\circ\mathcal{R}\mathrm{G}_{{\mathtt{i}}_k}.
\end{displaymath}

\begin{lemma}\label{lem1}
There is an isomorphism of functors as follows:
$(\mathcal{R}\mathrm{G})_{\mathbf{i}}\cong\mathcal{R}(\mathrm{G}_{\mathbf{i}})$.
\end{lemma}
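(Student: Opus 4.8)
The plan is to prove the general statement that $(\mathcal{R}\mathrm{G})_{\mathbf{i}}\cong\mathcal{R}(\mathrm{G}_{\mathbf{i}})$ by iterating a single composition step, so the entire problem reduces to establishing the case $k=2$, namely $\mathcal{R}\mathrm{G}_{\mathtt{i}}\circ\mathcal{R}\mathrm{G}_{\mathtt{j}}\cong\mathcal{R}(\mathrm{G}_{\mathtt{i}}\circ\mathrm{G}_{\mathtt{j}})$; the general case then follows by an easy induction on $k$. Since the derived category in question is $\mathcal{K}^b(\mathcal{X}_{\mathcal{I}})$, the homotopy category of complexes of injectives, the cleanest route is to recall the standard \emph{Grothendieck spectral sequence} / composition-of-derived-functors criterion: if $\mathrm{G}_{\mathtt{j}}$ sends injective objects to $\mathrm{G}_{\mathtt{i}}$-acyclic objects, then the right derived functors compose, i.e. $\mathcal{R}\mathrm{G}_{\mathtt{i}}\circ\mathcal{R}\mathrm{G}_{\mathtt{j}}\cong\mathcal{R}(\mathrm{G}_{\mathtt{i}}\circ\mathrm{G}_{\mathtt{j}})$. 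So the real content is to verify that $\mathrm{G}_{\mathtt{j}}$ carries injectives to objects on which the higher derived functors of $\mathrm{G}_{\mathtt{i}}$ vanish.

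The key observation that makes this trivial in our hereditary setting is already recorded in the excerpt: by the computation in Subsection~\ref{s2.3} we have $\mathrm{G}_{\mathtt{j}}\vert_{\mathcal{I}_{\mathcal{A}_{\Gamma}^{\Bbbk}}}\cong\mathrm{F}_{\mathtt{j}}\vert_{\mathcal{I}_{\mathcal{A}_{\Gamma}^{\Bbbk}}}$, and $\mathrm{F}_{\mathtt{j}}$, being a quotient of the identity functor on a hereditary category, preserves the category of injectives (Subsection~\ref{s2.3}). Hence $\mathrm{G}_{\mathtt{j}}$ maps an injective object to an injective object. Since injective objects are acyclic for \emph{any} left exact functor (in particular for $\mathrm{G}_{\mathtt{i}}$, as $\mathcal{R}^m\mathrm{G}_{\mathtt{i}}$ is computed from an injective coresolution which can be taken trivial on an injective module), the acyclicity hypothesis of the composition criterion is automatically satisfied. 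This is the place where heredity does all the work for us.

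First I would state precisely the composition-of-derived-functors lemma as it applies to $\mathcal{K}^b$ of injectives; concretely, to compute $\mathcal{R}(\mathrm{G}_{\mathtt{i}}\circ\mathrm{G}_{\mathtt{j}})$ on an object of $\mathcal{D}^b(\mathcal{A}_{\Gamma}^{\Bbbk})$ one replaces it by a bounded complex of injectives $Q^\bullet$ and applies $\mathrm{G}_{\mathtt{i}}\circ\mathrm{G}_{\mathtt{j}}$ componentwise. Then I would observe that applying $\mathrm{G}_{\mathtt{j}}$ componentwise to $Q^\bullet$ already yields a complex of injectives (by the acyclicity/preservation statement above), which therefore computes $\mathcal{R}\mathrm{G}_{\mathtt{j}}$, and that this resulting complex of injectives is an admissible object to which $\mathcal{R}\mathrm{G}_{\mathtt{i}}$ may be applied componentwise. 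Comparing the two resulting complexes gives the desired natural isomorphism. The main obstacle, if any, is purely bookkeeping: ensuring that the natural transformation relating the two sides is genuinely natural and compatible with the equivalence $\mathcal{K}^b(\mathcal{X}_{\mathcal{I}})\simeq\mathcal{D}^b(\mathcal{A}_{\Gamma}^{\Bbbk})$, rather than merely an objectwise isomorphism; but since both functors are defined \emph{componentwise} on the same category $\mathcal{X}_{\mathcal{I}}$ of injectives, this naturality is essentially built into the construction and requires no delicate argument.
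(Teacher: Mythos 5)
Your proof is correct, but it takes a genuinely different route from the paper's. The paper's own proof is a one-liner: by Proposition~\ref{exact} each $\mathrm{G}_{\mathtt{i}}$ is exact, so each $\mathcal{R}\mathrm{G}_{\mathtt{i}}$, and likewise $\mathcal{R}(\mathrm{G}_{\mathbf{i}})$, is computed by applying the underlying (exact) functor componentwise to any complex, and the isomorphism is immediate with no acyclicity criterion invoked. You instead run the Grothendieck composition-of-derived-functors argument, verifying the acyclicity hypothesis by observing that $\mathrm{G}_{\mathtt{j}}$ agrees with $\mathrm{F}_{\mathtt{j}}$ on injectives and that quotients of injectives are injective in a hereditary category, so $\mathrm{G}_{\mathtt{j}}$ carries injectives to injectives, which are acyclic for any left exact functor. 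This is essentially the ``standard spectral sequence argument'' that the authors defer to in the remark following the lemma for the non-hereditary setting. What the paper's route buys is brevity; what yours buys is independence from the (as the authors put it, accidental) full exactness of $\mathrm{G}_{\mathtt{i}}$ --- you only use left exactness plus preservation of injectives. Note, however, that your verification of the acyclicity hypothesis still leans on heredity (to conclude that $\mathrm{F}_{\mathtt{j}}$ of an injective is injective), so in a genuinely non-hereditary situation that step would require a separate check and can in fact fail, as the computation with $\mathrm{G}^!_{\mathtt{i}}$ in Subsection~\ref{s4.5} shows.
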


\begin{proof}
This follows directly from exactness of the $\mathrm{G}_{\mathtt{i}}$'s. 
\end{proof}

In the more general (non-hereditary) case to prove Lemma~\ref{lem1} one could use 
the standard spectral sequence argument as described, for example, in \cite[Section~III.7]{GeMa}.

\section{Proof of Theorem~\ref{thmmain}}\label{s3}

\subsection{Indecomposability}\label{s3.1}

Our crucial observation is the following:

\begin{proposition}\label{prop2}
Let $\Gamma=\mathbf{Q}$ and $\mathbf{i}:=(\mathtt{i}_1,\mathtt{i}_2,\dots,\mathtt{i}_k)\in\mathbf{Q}^k$. Then 
$\mathrm{G}_{\mathbf{i}}$ is either an indecomposable functor or zero.
\end{proposition}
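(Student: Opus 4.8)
The plan is to understand explicitly what the functors $\mathrm{G}_{\mathtt{i}}$ do to injective modules over the linear quiver $\mathbf{Q}$, and then to analyze compositions by tracking what happens to the indecomposable injectives $I_{\mathtt{j}}$. Since by Proposition~\ref{exact} each $\mathrm{G}_{\mathtt{i}}$ is exact, and since $\mathcal{A}_{\mathbf{Q}}^{\Bbbk}\text{-}\mathrm{mod}$ is hereditary with every object isomorphic to a direct sum of indecomposables supported on subintervals of the chain $\mathtt{1}\to\mathtt{2}\to\cdots\to\mathtt{n}\text{-}\mathtt{1}$, it suffices to describe the action of $\mathrm{G}_{\mathtt{i}}$ on these interval modules. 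First I would compute, using the isomorphism $\mathrm{G}_{\mathtt{i}}\vert_{\mathcal{I}}\cong\mathrm{F}_{\mathtt{i}}\vert_{\mathcal{I}}=(\mathrm{Id}/\mathrm{Tr}_{L_{\mathtt{i}}})\vert_{\mathcal{I}}$, the effect on the indecomposable injectives $I_{\mathtt{j}}$; because $\mathrm{Tr}_{L_{\mathtt{i}}}(I_{\mathtt{j}})$ is governed by which interval modules admit a map from the simple $L_{\mathtt{i}}$, this should reduce to a small combinatorial rule on the endpoints of the supporting interval.

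The key step is to show that on interval modules each $\mathrm{G}_{\mathtt{i}}$ sends an indecomposable to an indecomposable or to zero, and moreover that the \emph{target} indecomposable is again an interval module whose support is determined by simple truncation at the vertex $\mathtt{i}$. Granting this, indecomposability of the composition $\mathrm{G}_{\mathbf{i}}$ follows by induction on $k$: if $\mathrm{G}_{{\mathtt{i}}_2}\circ\cdots\circ\mathrm{G}_{{\mathtt{i}}_k}$ applied to a chosen indecomposable injective is either zero or a single indecomposable, then applying the last functor $\mathrm{G}_{{\mathtt{i}}_1}$ either kills it or yields another single indecomposable. The subtlety is that indecomposability of a \emph{functor} is not the same as indecomposability of its values: one must verify that $\mathrm{G}_{\mathbf{i}}$ has no nontrivial idempotent endomorphisms, equivalently that $\mathrm{End}(\mathrm{G}_{\mathbf{i}})$ is local. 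I would reduce this to the action on a single well-chosen object (for instance the injective cogenerator $I$, on which the $\mathrm{G}_{\mathtt{i}}$ behave like the $\mathrm{F}_{\mathtt{i}}$), using that a natural transformation of additive functors on a hereditary category is determined by, and controlled by, its components on injectives together with exactness.

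Concretely, I expect the heart of the argument to be the following: realize $\mathrm{G}_{\mathbf{i}}$ via its restriction to injectives and the $2$-action on $\mathcal{K}^b(\mathcal{X}_{\mathcal{I}})$ from Subsection~\ref{s2.4}, so that $\mathrm{G}_{\mathbf{i}}$ is determined by a single matrix of maps between direct sums of the $I_{\mathtt{j}}$. Using the explicit truncation rule for each $\mathrm{F}_{\mathtt{i}}$ on $I_{\mathtt{j}}$, one sees that any composition applied to an indecomposable injective is again (at most) a single indecomposable injective, never a proper direct sum with more than one nonzero summand. This collapse to a single indecomposable injective value is exactly what forces $\mathrm{End}(\mathrm{G}_{\mathbf{i}})$ to be one-dimensional (hence local) when $\mathrm{G}_{\mathbf{i}}\neq 0$.

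\textbf{The main obstacle} I anticipate is the passage from pointwise indecomposability to indecomposability of the functor itself, i.e.\ controlling $\mathrm{End}(\mathrm{G}_{\mathbf{i}})$ rather than merely the values $\mathrm{G}_{\mathbf{i}}(I_{\mathtt{j}})$. A functor can have all its values indecomposable yet decompose as a functor; the careful point is to show that an endomorphism of $\mathrm{G}_{\mathbf{i}}$ which is idempotent on the generating injective $I$ must be idempotent everywhere and hence trivial or the identity. I would handle this by exploiting that every module has a two-term injective coresolution (heredity), that $\mathrm{G}_{\mathbf{i}}$ is exact, and that a natural transformation is pinned down by its restriction to $\mathcal{I}_{\mathcal{A}_{\mathbf{Q}}^{\Bbbk}}$ where the $\mathrm{G}_{\mathtt{i}}$ coincide with the $\mathrm{F}_{\mathtt{i}}$; thus the endomorphism ring of $\mathrm{G}_{\mathbf{i}}$ embeds into the endomorphism ring of its value on the injective cogenerator, which the truncation rule shows to be local.
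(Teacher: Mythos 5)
Your pointwise analysis is correct: each $\mathrm{F}_{\mathtt{i}}$ (hence $\mathrm{G}_{\mathtt{i}}$) sends the uniserial injective $I_{\mathtt{j}}$ to $I_{\mathtt{j}}$ if $\mathtt{i}\neq\mathtt{j}$ and to $I_{\mathtt{j}}/L_{\mathtt{j}}\cong I_{\mathtt{j}\text{-}\mathtt{1}}$ if $\mathtt{i}=\mathtt{j}$, so any composition sends each indecomposable injective to a single indecomposable injective or to zero. You also correctly identify the real obstacle, namely that pointwise indecomposability of values does not give indecomposability of the functor. But your proposed resolution of that obstacle does not work as stated: you claim that $\mathrm{End}(\mathrm{G}_{\mathbf{i}})$ embeds into the endomorphism ring of the value $\mathrm{G}_{\mathbf{i}}\,I$ on the injective cogenerator, ``which the truncation rule shows to be local.'' It does not: as a left module $\mathrm{G}_{\mathbf{i}}\,I$ is a direct sum of up to $n-1$ nonzero indecomposable injectives (one for each $I_{\mathtt{j}}$ that survives), so $\mathrm{End}_{\mathcal{A}_{\mathbf{Q}}^{\Bbbk}}(\mathrm{G}_{\mathbf{i}}\,I)$ contains the projections onto these summands and is far from local. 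An embedding into a non-local ring yields nothing, so the key step is missing.

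What is actually needed --- and what the paper's proof supplies --- is the \emph{bimodule} structure on $I$, i.e.\ the naturality constraints coming from the right action. One has $\mathrm{End}(\mathrm{G}_{\mathbf{i}})\cong\mathrm{End}_{\text{bimod}}(\mathrm{G}_{\mathbf{i}}\,I)$, and $\mathrm{G}_{\mathbf{i}}\,I$ is a quotient of the bimodule $I$. The right action is given by surjections $I_{\mathtt{j}}\tto I_{\mathtt{j}\text{-}\mathtt{1}}$ which carry simple top ($\cong L_{\mathtt{1}}$) to simple top; these surjections glue the tops of all the left summands together, so the bimodule $I$ has \emph{simple} top (equal to the top of $I_{\mathtt{n}\text{-}\mathtt{1}}$). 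Hence every nonzero quotient bimodule of $I$, in particular $\mathrm{G}_{\mathbf{i}}\,I$, is indecomposable as a bimodule, which is what forces $\mathrm{End}(\mathrm{G}_{\mathbf{i}})$ to be local. Note that this is precisely the point that fails for the quiver $\mathtt{1}\to\mathtt{2}\leftarrow\mathtt{3}$ of Subsection~\ref{s4.3}, where already $\mathrm{F}_{\mathtt{2}}\,I_{\mathtt{2}}\cong L_{\mathtt{1}}\oplus L_{\mathtt{3}}$ decomposes; so some argument genuinely using the linear shape of $\mathbf{Q}$ through the right module structure is unavoidable, and your proposal does not contain one.
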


\begin{proof}
Assume that $\mathrm{G}_{\mathbf{i}}\neq 0$.
As $\mathrm{G}_{\mathbf{i}}$ is left exact, it is enough to show that its restriction to 
$\mathcal{I}_{\mathcal{A}_{\mathbf{Q}}^{\Bbbk}}$ is indecomposable. Consider the indecomposable 
$\mathcal{A}_{\mathbf{Q}}^{\Bbbk}\text{-}\mathcal{A}_{\mathbf{Q}}^{\Bbbk}$-bimodule $I$.
By construction, $\mathrm{G}_{\mathbf{i}}\, I$ is a quotient of $I$. Hence to prove our claim it is enough to show
that every non-zero quotient of the bimodule $I$ is an indecomposable bimodule. For this we have to show that the
bimodule $I$ has simple top. 

As a left module, $I$ is a direct sum of the $I_{\mathtt{i}}$'s. Each $I_{\mathtt{i}}$ has simple top isomorphic 
to $L_{\mathtt{1}}$. The right module structure on $I$ is given by surjective homomorphisms 
$I_{\mathtt{i}}\to I_{\mathtt{i}\text{-}\mathtt{1}}$, each of which sends simple top to simple top (or zero 
if $\mathtt{i}=\mathtt{1}$). This means that the top of the bimodule $I$ coincides with the top of
$I_{\mathtt{n}\text{-}\mathtt{1}}$ and hence is simple. The claim of the proposition follows. 
\end{proof}

\subsection{The (multi)semigroup of $\cG_n$}\label{s3.2}

Denote by $\mathcal{S}[\cG_n]$ the set of isomorphism classes of indecomposable $1$-morphisms in $\cG_n$.
By \cite{MM5}, composition of $1$-mor\-phisms induces on this set a natural structure of a multisemigroup.
In Proposition~\ref{prop2} above it is shown that any composition of projection functors is indecomposable
or zero. This implies that the multisemigroup structure on $\mathcal{S}[\cG_n]$ is, in fact, single valued,
and hence is a semigroup structure.

By construction, $\mathcal{S}[\cG_n]$ is generated by the classes of $\mathrm{G}_{\mathtt{i}}$.
By \cite{Pa,Gr}, these generators satisfy the Hecke-Kiselman relations corresponding to $\mathbf{Q}$.
Therefore $\mathcal{S}[\cG_n]$ is a quotient of the corresponding Hecke-Kiselman semigroup $\mathcal{C}_n$.

\subsection{Decategorification of the defining representation}\label{s3.3}

Consider the Grothendieck category $\mathrm{Gr}(\cG_n)$ of $\cG_n$. This is a usual category with one object
(which we identify with $\mathcal{X}$),
whose endomorphisms are identified with the split Grothendieck group of the fully additive category
$\cG_n(\mathcal{X},\mathcal{X})$. Let $\mathrm{Gr}(\mathcal{X})$ denote the Grothendieck group of the
abelian category $\mathcal{X}$. As every $1$-morphism in $\cG_n(\mathcal{X},\mathcal{X})$ is an exact
endofunctor of $\mathcal{X}$, the defining functorial action of $\cG_n(\mathcal{X},\mathcal{X})$ on $\mathcal{X}$
induces a usual action of the ring $\mathrm{Gr}(\cG_n)(\mathcal{X},\mathcal{X})$ on the abelian group
$\mathrm{Gr}(\mathcal{X})$.

Choose in $\mathrm{Gr}(\mathcal{X})$ a basis $\mathbf{b}_I$ consisting of the classes of indecomposable
injective modules $[I_{\mathtt{1}}]$, $[I_{\mathtt{2}}]$,\dots, 
$[I_{\mathtt{n}\text{-}\mathtt{1}}]$ (in this order). Then a direct calculation shows that the
linear transformation corresponding to the class $[\mathrm{G}_{\mathtt{i}}]$ is given in the basis $\mathbf{b}_I$
by the following $(n\text{-}1)\times (n\text{-}1)$-matrix:
\begin{equation}\label{eq25}
M_i:=\left(
\begin{array}{cccccccccccc}
1&0&0&\dots&0&0&0&0&\dots&0&0&0\\
0&1&0&\dots&0&0&0&0&\dots&0&0&0\\
0&0&1&\dots&0&0&0&0&\dots&0&0&0\\
\vdots&\vdots&\vdots&\ddots&\vdots&\vdots&\vdots&\vdots&\ddots&\vdots&\vdots&\vdots\\
0&0&0&\dots&1&0&0&0&\dots&0&0&0\\
0&0&0&\dots&0&1&1&0&\dots&0&0&0\\
0&0&0&\dots&0&0&0&0&\dots&0&0&0\\
0&0&0&\dots&0&0&0&1&\dots&0&0&0\\
\vdots&\vdots&\vdots&\ddots&\vdots&\vdots&\vdots&\vdots&\ddots&\vdots&\vdots&\vdots\\
0&0&0&\dots&0&0&0&0&\dots&1&0&0\\
0&0&0&\dots&0&0&0&0&\dots&0&1&0\\
0&0&0&\dots&0&0&0&0&\dots&0&0&1
\end{array}
\right)  
\end{equation}
(here the zero row is the $i$-th row from the top). These matrices coincide with the matrices of the 
natural representation of  $\mathcal{C}_n$, see \cite{GM2}, which is known to be effective. This implies that
$\mathcal{S}[\cG_n]\cong \mathcal{C}_n$.

The above establishes claims \eqref{thmmain.1}, \eqref{thmmain.2} and \eqref{thmmain.4} of Theorem~\ref{thmmain}.
Claim \eqref{thmmain.3} follows from claim \eqref{thmmain.2} by taking the integral group rings
on both sides. Note that, from the fact that the semigroup $\mathcal{S}[\cG_n]$ is finite, it now follows that 
the $2$-category $\cG_n$ is finitary in the sense of \cite{MM1}.

\section{Various consequences and related questions}\label{s4}

\subsection{Change of basis}\label{s4.1}

As explained in \cite{Ma}, one of the advantages of the categorical picture above is the fact that the
abelian group $\mathrm{Gr}(\mathcal{X})$ has several natural bases. Above we used the basis given by
classes of indecomposable injective modules. Two other natural bases in $\mathrm{Gr}(\mathcal{X})$ are given by
classes of indecomposable projective modules and by classes of simple modules, respectively. We consider first
the basis of simple modules. 

\begin{lemma}\label{lem21}
For $\mathtt{i},\mathtt{j}\in\mathbf{Q}$ we have
\begin{displaymath}
\mathrm{G}_\mathtt{i}\, L_\mathtt{j}\cong
\begin{cases}
L_\mathtt{j},& \mathtt{j}\neq \mathtt{i},\mathtt{i}+\mathtt{1};\\
X_\mathtt{i},& \mathtt{j}=\mathtt{i}+1;\\
0, & \mathtt{j}=\mathtt{i},
\end{cases}
\end{displaymath}
where $X_\mathtt{i}$ is the unique (up to isomorphism) module for which there is a non-split short exact sequence
as follows: $L_{\mathtt{i}+\mathtt{1}}\hookrightarrow X_\mathtt{i}\tto L_\mathtt{i}$.
\end{lemma}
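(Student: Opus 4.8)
The plan is to compute $\mathrm{G}_{\mathtt{i}}\, L_{\mathtt{j}}$ directly using the fact that $\mathrm{G}_{\mathtt{i}}$ is exact (Proposition~\ref{exact}) together with the identification $\mathrm{G}_{\mathtt{i}}\vert_{\mathcal{I}}\cong \mathrm{F}_{\mathtt{i}}\vert_{\mathcal{I}}$ established in Subsection~\ref{s2.3}. Since $\mathrm{G}_{\mathtt{i}}$ is exact, I can evaluate it on any module by choosing a short injective coresolution and applying $\mathrm{G}_{\mathtt{i}}$ termwise, using that on injectives $\mathrm{G}_{\mathtt{i}}$ agrees with the projection functor $\mathrm{F}_{\mathtt{i}}=\mathrm{Id}/\mathrm{Tr}_{L_{\mathtt{i}}}$. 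The first step is therefore to record the indecomposable injectives of $\mathcal{A}_{\mathbf{Q}}^{\Bbbk}$: for the linear quiver $\mathbf{Q}$, the injective $I_{\mathtt{k}}$ has composition factors $L_{\mathtt{1}},L_{\mathtt{2}},\dots,L_{\mathtt{k}}$ (it is the interval representation on $\{1,\dots,k\}$), so its socle is $L_{\mathtt{k}}$ and its top is $L_{\mathtt{1}}$.

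Next I would determine the effect of $\mathrm{F}_{\mathtt{i}}$ on each $I_{\mathtt{k}}$. Because $\mathrm{F}_{\mathtt{i}}$ quotients out the $L_{\mathtt{i}}$-trace, and $L_{\mathtt{i}}$ appears in $I_{\mathtt{k}}$ precisely when $k\geq i$, a short computation of the trace gives $\mathrm{F}_{\mathtt{i}}\, I_{\mathtt{k}}\cong I_{\mathtt{i}-\mathtt{1}}$ when $k\geq i$ and $\mathrm{F}_{\mathtt{i}}\, I_{\mathtt{k}}\cong I_{\mathtt{k}}$ when $k<i$; this is exactly the truncation recorded by the matrix $M_i$ in \eqref{eq25}. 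With this in hand I treat the three cases. For $\mathtt{j}\neq\mathtt{i},\mathtt{i}+\mathtt{1}$ I would use the injective coresolution $L_{\mathtt{j}}\hookrightarrow I_{\mathtt{j}}\tto I_{\mathtt{j}-\mathtt{1}}$ (with the convention $I_{\mathtt{0}}=0$) and apply $\mathrm{G}_{\mathtt{i}}$ termwise; the two injectives are either both fixed or both truncated in a parallel way, so the cokernel is again $L_{\mathtt{j}}$. The case $\mathtt{j}=\mathtt{i}$ uses the coresolution $L_{\mathtt{i}}\hookrightarrow I_{\mathtt{i}}\tto I_{\mathtt{i}-\mathtt{1}}$: here $\mathrm{G}_{\mathtt{i}}$ sends both terms to $I_{\mathtt{i}-\mathtt{1}}$ and the induced map is an isomorphism, forcing $\mathrm{G}_{\mathtt{i}}\, L_{\mathtt{i}}=0$, consistent with the $i$-th row of $M_i$ being zero.

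The main case, and the place where the named module $X_{\mathtt{i}}$ enters, is $\mathtt{j}=\mathtt{i}+\mathtt{1}$. I would apply $\mathrm{G}_{\mathtt{i}}$ to the coresolution $L_{\mathtt{i}+\mathtt{1}}\hookrightarrow I_{\mathtt{i}+\mathtt{1}}\tto I_{\mathtt{i}}$ to obtain the exact sequence $0\to \mathrm{G}_{\mathtt{i}}\, L_{\mathtt{i}+\mathtt{1}}\to I_{\mathtt{i}-\mathtt{1}}\to I_{\mathtt{i}-\mathtt{1}}\to\cdots$, where both injectives truncate to $I_{\mathtt{i}-\mathtt{1}}$, and then read off $\mathrm{G}_{\mathtt{i}}\, L_{\mathtt{i}+\mathtt{1}}$ as the kernel. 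The subtlety here is that the induced map $I_{\mathtt{i}-\mathtt{1}}\to I_{\mathtt{i}-\mathtt{1}}$ is the zero map rather than an isomorphism, so the kernel is all of $I_{\mathtt{i}-\mathtt{1}}$, which has composition factors $L_{\mathtt{1}},\dots,L_{\mathtt{i}-\mathtt{1}}$ and thus is \emph{not} the two-step extension $X_{\mathtt{i}}$. I expect this to be the main obstacle: the naive termwise computation does not obviously land on a module with factors $L_{\mathtt{i}+\mathtt{1}},L_{\mathtt{i}}$. The resolution is to track the induced map on $\mathrm{F}_{\mathtt{i}}$-images carefully, using the structure of the right module action on $I$ described in Proposition~\ref{prop2}, and to verify that $\mathrm{G}_{\mathtt{i}}$ applied to the nonsplit inclusion $L_{\mathtt{i}+\mathtt{1}}\hookrightarrow I_{\mathtt{i}+\mathtt{1}}$ produces the genuinely length-two module $X_{\mathtt{i}}$; uniqueness of $X_{\mathtt{i}}$ then follows because $\mathrm{Ext}^1(L_{\mathtt{i}},L_{\mathtt{i}+\mathtt{1}})$ is one-dimensional for the linear quiver, so there is exactly one nonsplit extension up to isomorphism. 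Consistency with \eqref{eq25} provides a useful check: the class $[X_{\mathtt{i}}]=[L_{\mathtt{i}+\mathtt{1}}]+[L_{\mathtt{i}}]$ must match the action of $M_i$ on the simple basis, which I would confirm by changing basis from $\mathbf{b}_I$ to the basis of simple classes.
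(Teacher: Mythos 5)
Your overall strategy --- apply $\mathrm{G}_{\mathtt{i}}$ to the coresolution $L_{\mathtt{j}}\hookrightarrow I_{\mathtt{j}}\tto I_{\mathtt{j}\text{-}\mathtt{1}}$ and use exactness together with $\mathrm{G}_{\mathtt{i}}\vert_{\mathcal{I}}\cong\mathrm{F}_{\mathtt{i}}\vert_{\mathcal{I}}$ --- is exactly the paper's (its proof is essentially this one line). However, your computation of $\mathrm{F}_{\mathtt{i}}$ on injectives is wrong, and this derails the argument. The trace $\mathrm{Tr}_{L_{\mathtt{i}}}(I_{\mathtt{k}})$ is the sum of images of homomorphisms $L_{\mathtt{i}}\to I_{\mathtt{k}}$; since $L_{\mathtt{i}}$ is simple, any such image lies in the socle of $I_{\mathtt{k}}$, which is $L_{\mathtt{k}}$. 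Hence $\mathrm{Tr}_{L_{\mathtt{i}}}(I_{\mathtt{k}})=0$ for all $\mathtt{k}\neq\mathtt{i}$, so $\mathrm{F}_{\mathtt{i}}\,I_{\mathtt{k}}\cong I_{\mathtt{k}}$ for $\mathtt{k}\neq\mathtt{i}$, while $\mathrm{F}_{\mathtt{i}}\,I_{\mathtt{i}}\cong I_{\mathtt{i}}/L_{\mathtt{i}}\cong I_{\mathtt{i}\text{-}\mathtt{1}}$. Your formula ($I_{\mathtt{k}}\mapsto I_{\mathtt{i}\text{-}\mathtt{1}}$ for all $\mathtt{k}\geq\mathtt{i}$) describes the quotient by the submodule generated by the weight space at vertex $\mathtt{i}$, not by the trace of the simple module. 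It is also not what the matrix $M_i$ of \eqref{eq25} records ($M_i$ fixes every $[I_{\mathtt{j}}]$ with $\mathtt{j}\neq\mathtt{i}$ and sends $[I_{\mathtt{i}}]$ to $[I_{\mathtt{i}\text{-}\mathtt{1}}]$), and for $\mathtt{j}>\mathtt{i}+\mathtt{1}$ it would force $\mathrm{G}_{\mathtt{i}}\,L_{\mathtt{j}}=0$, since both terms of the coresolution would become $I_{\mathtt{i}\text{-}\mathtt{1}}$ with the induced map an isomorphism --- contradicting the lemma you are trying to prove.

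With the correct formula the ``main obstacle'' you describe disappears and no further idea is needed. For $\mathtt{j}=\mathtt{i}+\mathtt{1}$ one gets the exact sequence $0\to\mathrm{G}_{\mathtt{i}}\,L_{\mathtt{i}+\mathtt{1}}\to I_{\mathtt{i}+\mathtt{1}}\to I_{\mathtt{i}\text{-}\mathtt{1}}\to 0$ in which the second map is the canonical surjection; its kernel is the length-two submodule of $I_{\mathtt{i}+\mathtt{1}}$ supported at the vertices $\mathtt{i}$ and $\mathtt{i}+\mathtt{1}$, which is precisely the nonsplit extension $X_{\mathtt{i}}$ of $L_{\mathtt{i}}$ by $L_{\mathtt{i}+\mathtt{1}}$ (uniqueness from $\dim\mathrm{Ext}^1(L_{\mathtt{i}},L_{\mathtt{i}+\mathtt{1}})=1$, as you note). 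The cases $\mathtt{j}\neq\mathtt{i},\mathtt{i}+\mathtt{1}$ (both injectives untouched, kernel unchanged) and $\mathtt{j}=\mathtt{i}$ (induced map $I_{\mathtt{i}}/L_{\mathtt{i}}\to I_{\mathtt{i}\text{-}\mathtt{1}}$ an isomorphism) then go through as you describe. So the proof is repairable along your own lines, but as written the key computation is incorrect and the crucial case $\mathtt{j}=\mathtt{i}+\mathtt{1}$ is left unproved, deferred to ``tracking the induced map carefully.''
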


\begin{proof}
The claim follows by applying $\mathrm{G}_\mathtt{i}$ to the following   injective coresolution of $L_\mathtt{j}$:  
$L_\mathtt{j}\hookrightarrow I_\mathtt{j}\tto I_{\mathtt{j}\text{-}\mathtt{1}}$ (here $I_\mathtt{0}:=0$).
\end{proof}

Choose in $\mathrm{Gr}(\mathcal{X})$ a basis $\mathbf{b}_L$ consisting of the classes of simple 
modules $[L_{{\mathtt{n}}\text{-}\mathtt{1}}]$,\dots, $[L_{\mathtt{2}}]$,$[L_{\mathtt{1}}]$ 
(in this order). From Lemma~\ref{lem21} it follows
that the linear transformation corresponding to the class $[\mathrm{G}_\mathtt{i}]$ is given in 
the basis $\mathbf{b}_L$ by the transpose of the matrix $M_{{\mathtt{n}}\text{-}\mathtt{i}}$ from \eqref{eq25}
(compare with \cite[Lemma~8]{AM}).

\begin{lemma}\label{lem22}
For $\mathtt{i},\mathtt{j}\in\mathbf{Q}$ we have
\begin{displaymath}
\mathrm{G}_{\mathtt{i}}\, P_{\mathtt{j}}\cong
\begin{cases}
P_{\mathtt{j}},& {\mathtt{i}}\neq {\mathtt{n}}\text{-}\mathtt{1},{\mathtt{j}}\text{-}\mathtt{1};\\
P_{{\mathtt{j}}\text{-}\mathtt{1}},& {\mathtt{i}}={\mathtt{j}}\text{-}\mathtt{1};\\
Y_{\mathtt{j}}, & {\mathtt{i}}={\mathtt{n}}\text{-}\mathtt{1},
\end{cases}
\end{displaymath}
where $Y_{\mathtt{j}}:=P_{\mathtt{j}}/P_{{\mathtt{n}}\text{-}\mathtt{1}}$.
\end{lemma}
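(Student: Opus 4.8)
The plan is to run the same strategy as in the proof of Lemma~\ref{lem21}: exploit exactness of $\mathrm{G}_{\mathtt{i}}$ (Proposition~\ref{exact}) together with the isomorphism $\mathrm{G}_{\mathtt{i}}\vert_{\mathcal{I}_{\mathcal{A}_{\mathbf{Q}}^{\Bbbk}}}\cong\mathrm{F}_{\mathtt{i}}\vert_{\mathcal{I}_{\mathcal{A}_{\mathbf{Q}}^{\Bbbk}}}$, and to feed $\mathrm{G}_{\mathtt{i}}$ an injective coresolution of $P_{\mathtt{j}}$. Over the linear quiver $\mathbf{Q}$ every indecomposable module is an interval module; in particular $P_{\mathtt{j}}$ is uniserial with top $L_{\mathtt{j}}$ and socle $L_{\mathtt{n}\text{-}\mathtt{1}}$, so its injective envelope is $I_{\mathtt{n}\text{-}\mathtt{1}}$, and comparing supports identifies the cokernel with the interval module on the vertices $\mathtt{1},\dots,\mathtt{j}\text{-}\mathtt{1}$, i.e.\ with $I_{\mathtt{j}\text{-}\mathtt{1}}$. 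Thus I would start from the short exact sequence
\begin{displaymath}
P_{\mathtt{j}}\hookrightarrow I_{\mathtt{n}\text{-}\mathtt{1}}\stackrel{\pi}{\tto} I_{\mathtt{j}\text{-}\mathtt{1}}
\end{displaymath}
(with the convention $I_{\mathtt{0}}:=0$, so that $P_{\mathtt{1}}\cong I_{\mathtt{n}\text{-}\mathtt{1}}$ is injective).

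Next I would record the action of $\mathrm{G}_{\mathtt{i}}$ on injectives. Since $\mathrm{G}_{\mathtt{i}}\cong\mathrm{F}_{\mathtt{i}}=\mathrm{Id}/\mathrm{Tr}_{L_{\mathtt{i}}}$ on $\mathcal{I}_{\mathcal{A}_{\mathbf{Q}}^{\Bbbk}}$, and any image of $L_{\mathtt{i}}$ in $I_{\mathtt{k}}$ lies in $\mathrm{soc}(I_{\mathtt{k}})=L_{\mathtt{k}}$, the trace $\mathrm{Tr}_{L_{\mathtt{i}}}(I_{\mathtt{k}})$ equals the socle when $\mathtt{i}=\mathtt{k}$ and is zero otherwise. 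Hence
\begin{displaymath}
\mathrm{G}_{\mathtt{i}}\,I_{\mathtt{k}}\cong
\begin{cases}
I_{\mathtt{k}},&\mathtt{i}\neq\mathtt{k};\\
I_{\mathtt{k}\text{-}\mathtt{1}},&\mathtt{i}=\mathtt{k},
\end{cases}
\end{displaymath}
which is exactly the input already used implicitly in the proof of Lemma~\ref{lem21}.

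Then I would apply the exact functor $\mathrm{G}_{\mathtt{i}}$ to the coresolution, obtaining the short exact sequence $\mathrm{G}_{\mathtt{i}}P_{\mathtt{j}}\hookrightarrow\mathrm{G}_{\mathtt{i}}I_{\mathtt{n}\text{-}\mathtt{1}}\tto\mathrm{G}_{\mathtt{i}}I_{\mathtt{j}\text{-}\mathtt{1}}$, and split into the three cases, noting first that for $\mathtt{j}\in\mathbf{Q}$ the conditions $\mathtt{i}=\mathtt{j}\text{-}\mathtt{1}$ and $\mathtt{i}=\mathtt{n}\text{-}\mathtt{1}$ are mutually exclusive. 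If $\mathtt{i}\neq\mathtt{n}\text{-}\mathtt{1},\mathtt{j}\text{-}\mathtt{1}$, both injective terms are unchanged and the kernel is $P_{\mathtt{j}}$ again. If $\mathtt{i}=\mathtt{j}\text{-}\mathtt{1}$, the sequence becomes $\mathrm{G}_{\mathtt{i}}P_{\mathtt{j}}\hookrightarrow I_{\mathtt{n}\text{-}\mathtt{1}}\tto I_{\mathtt{j}\text{-}\mathtt{2}}$, whose kernel I would identify with $P_{\mathtt{j}\text{-}\mathtt{1}}$. If $\mathtt{i}=\mathtt{n}\text{-}\mathtt{1}$, the sequence becomes $\mathrm{G}_{\mathtt{i}}P_{\mathtt{j}}\hookrightarrow I_{\mathtt{n}\text{-}\mathtt{2}}\tto I_{\mathtt{j}\text{-}\mathtt{1}}$, and identifying the kernel with the image of $P_{\mathtt{j}}$ in $I_{\mathtt{n}\text{-}\mathtt{2}}=I_{\mathtt{n}\text{-}\mathtt{1}}/\mathrm{soc}$ yields $P_{\mathtt{j}}/\mathrm{soc}(P_{\mathtt{j}})=P_{\mathtt{j}}/P_{\mathtt{n}\text{-}\mathtt{1}}=Y_{\mathtt{j}}$.

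The routine part is exactness and the action on injectives; the one place that needs care is identifying each kernel as a named module rather than merely matching dimensions. The cleanest way to do this is to track the induced map $\mathrm{F}_{\mathtt{i}}(\pi)$ through the commuting square coming from the natural transformation $\mathrm{Id}\to\mathrm{F}_{\mathtt{i}}$: exactness hands over surjectivity for free, while the kernel is computed as the preimage under $\pi$ of $\mathrm{Tr}_{L_{\mathtt{i}}}(I_{\mathtt{j}\text{-}\mathtt{1}})=\mathrm{soc}(I_{\mathtt{j}\text{-}\mathtt{1}})$ in the case $\mathtt{i}=\mathtt{j}\text{-}\mathtt{1}$ (giving the submodule of $I_{\mathtt{n}\text{-}\mathtt{1}}$ lying over $\ker\pi=P_{\mathtt{j}}$ with quotient $L_{\mathtt{j}\text{-}\mathtt{1}}$, namely $P_{\mathtt{j}\text{-}\mathtt{1}}$), or as the image of $\ker\pi=P_{\mathtt{j}}$ modulo the socle it meets in the case $\mathtt{i}=\mathtt{n}\text{-}\mathtt{1}$. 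Here the uniserial structure of the interval modules over $\mathbf{Q}$ makes each such submodule manifestly an interval module, and hence projective (or a quotient of a projective) as claimed.
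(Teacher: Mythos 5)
Your proposal is correct and is exactly the paper's argument: the proof in the text consists precisely of applying the exact functor $\mathrm{G}_{\mathtt{i}}$ to the injective coresolution $P_{\mathtt{j}}\hookrightarrow I_{\mathtt{n}\text{-}\mathtt{1}}\tto I_{\mathtt{j}\text{-}\mathtt{1}}$, with the case analysis left to the reader. Your expanded computation of $\mathrm{G}_{\mathtt{i}}$ on the injectives and the identification of the kernels via the interval-module structure correctly fills in those details.
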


\begin{proof}
The claim follows by applying $\mathrm{G}_{\mathtt{i}}$ to the following injective coresolution of $P_{\mathtt{j}}$:  
$P_{\mathtt{j}}\hookrightarrow I_{{\mathtt{n}}\text{-}\mathtt{1}}\tto I_{{\mathtt{j}}\text{-}\mathtt{1}}$ 
(here $I_{\mathtt{0}}:=0$).
\end{proof}

Choose in $\mathrm{Gr}(\mathcal{X})$ the basis $\mathbf{b}_P$ consisting of the classes of indecomposable projective 
modules $[P_{\mathtt{1}}]$, $[P_{\mathtt{2}}]$,\dots, $[P_{{\mathtt{n}}\text{-}\mathtt{1}}]$ 
(in this order). From Lemma~\ref{lem22} it follows that for $\mathtt{i}\neq {\mathtt{n}}\text{-}\mathtt{1}$ 
the linear transformation corresponding to the class $[\mathrm{G}_{\mathtt{i}}]$ is given in 
the basis $\mathbf{b}_P$ by the matrix $M_{{\mathtt{i}}+{\mathtt{1}}}$, while the linear 
transformation corresponding to the 
class $[\mathrm{G}_{{\mathtt{n}}\text{-}\mathtt{1}}]$ is given in the basis $\mathbf{b}_P$ by the 
following $(n\text{-}1)\times (n\text{-}1)$-matrix:
\begin{displaymath}
\left(
\begin{array}{cccccccccccc}
1&0&0&\dots&0&0&0\\
0&1&0&\dots&0&0&0\\
0&0&1&\dots&0&0&0\\
\vdots&\vdots&\vdots&\ddots&\vdots&\vdots&\vdots\\
0&0&0&\dots&1&0&0\\
0&0&0&\dots&0&1&0\\
-1&-1&-1&\dots&-1&-1&0\\
\end{array}
\right)  
\end{displaymath}
This gives an ``unusual'' effective representation of $\mathcal{C}_n$.

\subsection{Integral weightings in representations}\label{s4.2}

Let $z_1,z_2,\dots,z_{n-2}$ be positive integers. Consider the quiver
\begin{displaymath}
\xymatrix{
\mathtt{1}\ar@/^1pc/[rr]\ar@/^/[rr]\ar@{}[rr]|-{\vdots}\ar@/_1pc/[rr]
&&\mathtt{2}\ar@/^1pc/[rr]\ar@/^/[rr]\ar@{}[rr]|-{\vdots}\ar@/_1pc/[rr]
&&\ar@/^1pc/[rr]\ar@/^/[rr]\ar@{}[rr]|-{\vdots}\ar@/_1pc/[rr]&&
\dots\ar@/^1pc/[rr]\ar@/^/[rr]\ar@{}[rr]|-{\vdots}\ar@/_1pc/[rr]&&\mathtt{n}\text{-}\mathtt{1}
}
\end{displaymath}

\noindent
where we have $z_1$ arrows from $\mathtt{1}$ to $\mathtt{2}$, $z_2$ arrows from $\mathtt{2}$ to $\mathtt{3}$
and so on. All the above constructions and definitions carry over to this quiver in a straightforward way.
The only difference will be the explicit forms of matrices corresponding to the action of 
$[\mathrm{G}_{{\mathtt{i}}}]$. For example, in the basis of injective modules this action will be
given by the matrix
\begin{displaymath}
M_{\mathtt{i}}(z):=\left(
\begin{array}{cccccccccccc}
1&0&0&\dots&0&0&0&0&\dots&0&0&0\\
0&1&0&\dots&0&0&0&0&\dots&0&0&0\\
0&0&1&\dots&0&0&0&0&\dots&0&0&0\\
\vdots&\vdots&\vdots&\ddots&\vdots&\vdots&\vdots&\vdots&\ddots&\vdots&\vdots&\vdots\\
0&0&0&\dots&1&0&0&0&\dots&0&0&0\\
0&0&0&\dots&0&1&z_{i-1}&0&\dots&0&0&0\\
0&0&0&\dots&0&0&0&0&\dots&0&0&0\\
0&0&0&\dots&0&0&0&1&\dots&0&0&0\\
\vdots&\vdots&\vdots&\ddots&\vdots&\vdots&\vdots&\vdots&\ddots&\vdots&\vdots&\vdots\\
0&0&0&\dots&0&0&0&0&\dots&1&0&0\\
0&0&0&\dots&0&0&0&0&\dots&0&1&0\\
0&0&0&\dots&0&0&0&0&\dots&0&0&1
\end{array}
\right).  
\end{displaymath}
Similar modifications work also for the two other bases (of simple and indecomposable projective modules).
This should be compared with the weighted ``natural representation'' of a Hecke-Kiselman monoid constructed
in \cite{Fo}.

\subsection{Other Hecke-Kiselman semigroups}\label{s4.3}

Our construction generalizes, in a straightforward way, to all other Hecke-Kiselman semigroups associated to
acyclic quivers without $2$-cycles. However, Theorem~\ref{thmmain} does not hold in this generality. The reason
for this is the failure of Proposition~\ref{prop2} already in the case of the following quiver $\Gamma$:
\begin{displaymath}
\xymatrix{
\mathtt{1}\ar[rr]&&\mathtt{2}&&\mathtt{3}\ar[ll]
}
\end{displaymath}
In the general case the Grothendieck category of the corresponding $2$-category $\cG_{\Gamma}$ is identified with another,
more complicated, object. This object and its relation to the corresponding Hecke-Kiselman monoid will be 
studied in another paper.

\subsection{Koszul dual picture}\label{s4.4}

The category $\mathcal{A}_{\mathbf{Q}}^{\Bbbk}$ is positively graded in the natural way (the degree of each 
arrow equals $1$). This grading is Koszul and hence we can consider the Koszul dual category 
$(\mathcal{A}^!)_{\mathbf{Q}}^{\Bbbk}$ which is given by the opposite quiver 
\begin{displaymath}
\xymatrix{
\mathtt{1}&&\mathtt{2}\ar[ll]&&\mathtt{3}\ar[ll]&&\dots\ar[ll]&&\mathtt{n}\text{-}\mathtt{1}\ar[ll]
}
\end{displaymath}
together with the relations that each path of length two equals zero. The classical Koszul duality, 
see \cite{BGS}, provides an equivalence between bounded derived categories of graded 
$\mathcal{A}_{\mathbf{Q}}^{\Bbbk}$-modules and
graded $(\mathcal{A}^!)_{\mathbf{Q}}^{\Bbbk}$-modules. Note that all projective, injective and simple
$\mathcal{A}_{\mathbf{Q}}^{\Bbbk}$-modules are gradable and hence all our constructions have natural
graded lifts. In other words, $\cG_{n}$ has the natural positive grading in the sense of \cite{MM2}.
Using the theory of Koszul dual functors developed in \cite{Ma1,MSO}, one can reformulate the functors
$\mathcal{R}\mathrm{G}_{{\mathtt{i}}}$'s in terms of the derived category of 
$(\mathcal{A}^!)_{\mathbf{Q}}^{\Bbbk}$-modules.

\subsection{An alternative Koszul dual picture}\label{s4.5}

We can also try to consider the action of the usual projection functors 
$\mathrm{F}^!_{\mathtt{i}}$, ${\mathtt{i}}\in\mathbf{Q}$, 
on $(\mathcal{A}^!)_{\mathbf{Q}}^{\Bbbk}\text{-}\mathrm{mod}$. From \cite{Pa,Gr} we have that, mapping $\alpha_i$ 
to $\mathrm{F}^!_{{\mathtt{n}}\text{-}{\mathtt{i}}}$, 
${\mathtt{i}}\in\mathbf{Q}$, extends to a weak functorial action of $\mathcal{C}_n$
on $(\mathcal{A}^!)_{\mathbf{Q}}^{\Bbbk}\text{-}\mathrm{mod}$. Let $L^!_{\mathtt{i}}$ be the simple
$(\mathcal{A}^!)_{\mathbf{Q}}^{\Bbbk}$-module corresponding to ${\mathtt{i}}$ and 
$I^!_{\mathtt{i}}$ be the indecomposable injective with 
socle $L^!_{\mathtt{i}}$. Set $I^!:=\bigoplus_{{\mathtt{i}}\in\mathbf{Q}}I^!_{\mathtt{i}}$ and define
\begin{displaymath}
\mathrm{G}^!_{\mathtt{i}}:=
\mathrm{Hom}_{(\mathcal{A}^!)_{\mathbf{Q}}^{\Bbbk}}((\mathrm{F}^!_{\mathtt{i}}\,I^!)^* ,{}_-).
\end{displaymath}
Then $\mathrm{G}^!_{\mathtt{i}}$ is left exact and we have 
$\mathrm{G}^!_{\mathtt{i}}\vert_{\mathcal{I}_{(\mathcal{A}^!)_{\mathbf{Q}}^{\Bbbk}}}\cong
\mathrm{F}^!_{\mathtt{i}}\vert_{\mathcal{I}_{(\mathcal{A}^!)_{\mathbf{Q}}^{\Bbbk}}}$ 
(similarly to Subsection~\ref{s2.3}), where 
$\mathcal{I}_{(\mathcal{A}^!)_{\mathbf{Q}}^{\Bbbk}}$ denotes the category of injective 
$(\mathcal{A}^!)_{\mathbf{Q}}^{\Bbbk}$-modules. The first difference with  $\mathrm{G}_{\mathtt{i}}$ is failure of
Proposition~\ref{exact} for $\mathrm{G}^!_{\mathtt{i}}$ if $n\geq 4$ (note that 
$(\mathcal{A}^!)_{\mathbf{Q}}^{\Bbbk}\cong(\mathcal{A})_{\mathbf{Q}}^{\Bbbk}$ if $n=3$). For example, applying 
$\mathrm{G}^!_{\mathtt{4}}$ to the injective coresolution $L^!_{\mathtt{2}}\hookrightarrow  
I^!_{\mathtt{2}}\to I^!_{\mathtt{3}}\to I^!_{\mathtt{4}}\to \ldots \to I^!_{\mathtt{n-2}} \tto I^!_{\mathtt{n-1}}$
of $L^!_{\mathtt{2}}$, we get $\mathcal{R}^1\mathrm{G}^!_{\mathtt{4}}\, L^!_{\mathtt{2}}\cong L^!_{\mathtt{4}}\neq 0$.

This observation implies that (for $n\geq 5$)
\begin{equation}\label{eq57}
\mathcal{R}\mathrm{G}^!_{\mathtt{4}}\circ \mathcal{R}\mathrm{G}^!_{\mathtt{1}}\not\equiv
\mathcal{R}\mathrm{G}^!_{\mathtt{1}}\circ \mathcal{R}\mathrm{G}^!_{\mathtt{4}},
\end{equation}
that is the functors $\mathcal{R}\mathrm{G}^!_{\mathtt{4}}$ and $\mathcal{R}\mathrm{G}^!_{\mathtt{1}}$ 
do {\em not} satisfy the 
defining relations for $\mathcal{C}_n$. Indeed, for $n=5$ evaluating the right hand side of \eqref{eq57} at $I^!$ we get
\begin{displaymath}
\mathcal{R}\mathrm{G}^!_{\mathtt{4}}\, I^!\cong\mathrm{F}^!_{\mathtt{4}}\, I^!\cong 
I_{\mathtt{1}}^!\oplus I_{\mathtt{2}}^!\oplus I_{\mathtt{3}}^!.
\end{displaymath}
The result is injective and hence acyclic for $\mathrm{G}^!_1$, which means that the right hand side 
of \eqref{eq57} produces no homology in homological position $1$ when evaluated at $I^!$. On the other hand,
\begin{displaymath}
\mathcal{R}\mathrm{G}^!_{\mathtt{1}}\, I^!\cong\mathrm{F}^!_{\mathtt{1}}\, I^!\cong
L^!_{\mathtt{2}}\oplus I^!_{\mathtt{2}}\oplus I^!_{\mathtt{3}}\oplus I^!_{\mathtt{4}} 
\end{displaymath}
and this is not acyclic for $\mathrm{G}^!_{\mathtt{4}}$ by the computation above, which means that the left hand side 
of \eqref{eq57} produces a non-zero homology in homological position $1$ when evaluated at $I^!$. 

Similar arguments show the inequality for $n>5$, since we have   
\begin{displaymath}
\mathcal{R}\mathrm{G}^!_{\mathtt{i}}\, I^!\cong\mathrm{F}^!_{\mathtt{i}}\, I^!\cong  L^!_{\mathtt{i+1}}\oplus 
\bigoplus_{{\mathtt{i}\neq\mathtt{j}}\in\mathbf{Q}}I^!_{\mathtt{j}} 
\end{displaymath}
(where $L^!_{\mathtt{n}}:=0$) and $\mathcal{R}^1\mathrm{G}^!_{\mathtt{1}}\, L^!_{\mathtt{5}}\cong 0$. Hence the weak functorial action of $\mathcal{C}_n$ on $(\mathcal{A}^!)_{\mathbf{Q}}^{\Bbbk}\text{-}\mathrm{mod}$ does 
not seem to be naturally extendable to $\mathcal{D}^b((\mathcal{A}^!)_{\mathbf{Q}}^{\Bbbk})$.

Using spectral sequence arguments (see the remark after Lemma~\ref{lem1}) one proves the following relations:  
\begin{gather*}
\mathcal{R}\mathrm{G}^!_{\mathtt{i}}\circ \mathcal{R}\mathrm{G}^!_{\mathtt{i}}\cong 
\mathcal{R}\mathrm{G}^!_{\mathtt{i}},\quad\quad
\mathcal{R}\mathrm{G}^!_{\mathtt{i}}\circ \mathcal{R}\mathrm{G}^!_{{\mathtt{i}}+{\mathtt{2}}}\cong
\mathcal{R}\mathrm{G}^!_{{\mathtt{i}}+{\mathtt{2}}}\circ \mathcal{R}\mathrm{G}^!_{\mathtt{i}},\\
\mathcal{R}\mathrm{G}^!_{\mathtt{i}}\circ 
\mathcal{R}\mathrm{G}^!_{{\mathtt{i}}+{\mathtt{1}}}\circ \mathcal{R}\mathrm{G}^!_{\mathtt{i}}
\cong \mathcal{R}\mathrm{G}^!_{{\mathtt{i}}+{\mathtt{1}}}\circ\mathcal{R}\mathrm{G}^!_{\mathtt{i}} 
\circ \mathcal{R}\mathrm{G}^!_{{\mathtt{i}}+{\mathtt{1}}}.
\end{gather*}
It is an interesting question to determine exactly what kind of monoid (if any) the functors 
$\mathcal{R}\mathrm{G}^!_{\mathtt{i}}$'s generate.

\subsection{Combinatorics of subbimodules in $I$}\label{s4.6}

The set $\mathcal{C}_n$ has a natural partial order given by $f\leq g$ if and only if $f(i)\leq g(i)$ for all 
$i\in \mathbf{N}_{n}$.
Let $\mathfrak{I}$ denote the set of subbimodules in the 
$\mathcal{A}_{\mathbf{Q}}^{\Bbbk}\text{-}\mathcal{A}_{\mathbf{Q}}^{\Bbbk}$-bimodule $I$. Then $\mathfrak{I}$ 
is partially ordered  with respect to inclusions. Define a map $\Theta:\mathfrak{I}\to \mathcal{C}_n$,
$X\mapsto \Theta_X$, as follows: Let $X\in \mathfrak{I}$. As each $I_{\mathtt{i}}$ is uniserial and 
different $I_{\mathtt{i}}$'s have non-isomorphic socles, we have 
$X=\bigoplus_{{\mathtt{i}}\in\mathbf{Q}}X\cap I_{\mathtt{i}}$. If $I_{\mathtt{i}}=X\cap I_{\mathtt{i}}$, 
set $t_i:=0$. If $I_{\mathtt{i}}\neq X\cap I_{\mathtt{i}}$, let 
$t_i$ be such 
that $L_{{\mathtt{t}}_{\mathtt{i}}}$ is the socle of $I_{\mathtt{i}}/(X\cap I_{\mathtt{i}})$ 
(in particular, $t_i\leq i$). The right action on $X$ is given 
by surjections $\varphi_i:I_{{\mathtt{i}}+{\mathtt{1}}}\tto I_{\mathtt{i}}$. 
Then $\varphi_i(X\cap I_{{\mathtt{i}}+{\mathtt{1}}})\subset X\cap I_\mathtt{i}$ implies that 
$t_i\leq t_{i+1}$. Now define $\Theta_X$ as follows: 
\begin{displaymath}
\Theta_X(i):=
\begin{cases}
1,& i=1;\\
1+t_{i-1},& i\neq 1.
\end{cases}
\end{displaymath}
Then the above properties of $t_i$ imply that $\Theta_X\in \mathcal{C}_n$. 
The following is a dual version of \cite[Exercise~6.25(a)]{St} and \cite{CP}.

\begin{proposition}\label{prop23}
The map $\Theta$ is an isomorphism of partially ordered sets. 
\end{proposition}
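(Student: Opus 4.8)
The plan is to show that $\Theta$ is a bijection which, together with its inverse, is order-preserving; since both $\mathfrak{I}$ and $\mathcal{C}_n$ are finite posets, it suffices to produce an explicit two-sided inverse and check monotonicity in one direction. First I would analyze the structure of $\mathfrak{I}$ explicitly. Because each $I_{\mathtt{i}}$ is uniserial of length $i$ (with socle $L_{\mathtt{i}}$ and top $L_{\mathtt{1}}$, as follows from the description in Proposition~\ref{prop2}), every subbimodule $X$ decomposes as $X=\bigoplus_{\mathtt{i}}(X\cap I_{\mathtt{i}})$, and each summand $X\cap I_{\mathtt{i}}$ is the unique submodule of $I_{\mathtt{i}}$ of the appropriate length. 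Thus $X$ is completely determined by the tuple $(t_1,t_2,\dots,t_{n-1})$ recording the socle layer at which each component is cut off, where $t_i\in\{0,1,\dots,i\}$ with the convention that $t_i=0$ means $X\cap I_{\mathtt{i}}=I_{\mathtt{i}}$. The key constraint, already recorded in the setup before the proposition, is that compatibility with the right action via the surjections $\varphi_i:I_{\mathtt{i}+\mathtt{1}}\tto I_{\mathtt{i}}$ forces $t_i\le t_{i+1}$ for all $i$.

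The central step is to verify that the assignment $X\mapsto(t_1,\dots,t_{n-1})\mapsto\Theta_X$ is a bijection onto $\mathcal{C}_n$. I would argue that the tuples $(t_1,\dots,t_{n-1})$ arising from subbimodules are \emph{exactly} those satisfying $0\le t_i\le i$ and $t_i\le t_{i+1}$: any such tuple defines, componentwise, a valid submodule of each $I_{\mathtt{i}}$, and the monotonicity condition is precisely what is needed for the resulting direct sum to be stable under all the $\varphi_i$, hence to be a subbimodule. Next I would check that $\Theta$ translates these tuples into $\mathcal{C}_n$ bijectively. Writing $f=\Theta_X$, we have $f(1)=1$ and $f(i)=1+t_{i-1}$ for $i\ge 2$; the bound $t_{i-1}\le i-1$ gives $f(i)\le i$ (order-decreasing), and $t_{i-1}\le t_i$ gives $f(i)\le f(i+1)$ (order-preserving), while $f(1)=1\le f(2)$ holds automatically since $f(2)=1+t_1\ge 1$. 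Conversely, any $f\in\mathcal{C}_n$ has $f(1)=1$ (as $f(1)\le 1$ forces $f(1)=1$), so setting $t_{i-1}:=f(i)-1$ recovers a unique admissible tuple. This establishes that $\Theta$ is a bijection.

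For monotonicity, I would observe that inclusion $X\subseteq Y$ of subbimodules is equivalent to $X\cap I_{\mathtt{i}}\subseteq Y\cap I_{\mathtt{i}}$ for every $\mathtt{i}$, which (since the submodules of the uniserial $I_{\mathtt{i}}$ are linearly ordered by length) is equivalent to the cut-off layers satisfying $t_i^X\ge t_i^Y$ for all $i$: a larger subbimodule is cut off \emph{higher} in the socle filtration, i.e. has smaller $t_i$. Translating through $\Theta$, this reads $\Theta_X(i)\ge\Theta_Y(i)$ for all $i$, which is the reverse of the order on $\mathcal{C}_n$; so I would be careful to confirm whether $\Theta$ is order-preserving or order-reversing, and if the latter, either adjust the definition of $\Theta$ by a complementation $t_i\mapsto i-t_i$ or simply state the conclusion with the correct variance. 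The main obstacle I anticipate is exactly this bookkeeping of variance together with the boundary conventions ($I_{\mathtt{0}}:=0$, the role of $t_i=0$ versus $t_i=i$, and the shift between the index $i$ on tuples and $i$ on the map $f$); the representation-theoretic content—uniseriality of the $I_{\mathtt{i}}$ and the action of the $\varphi_i$ on submodules—is the easy part, already essentially supplied by the discussion preceding the proposition, so the real work is purely combinatorial verification that the inclusion order and the natural order on $\mathcal{C}_n$ match up correctly under the explicit bijection.
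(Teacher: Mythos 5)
Your proof is correct and follows essentially the same route as the paper's, which merely asserts injectivity from the construction, surjectivity via the admissible tuples $(t_1,\dots,t_{n-1})$ with $t_i\leq i$ and $t_i\leq t_{i+1}$, and monotonicity ``by construction'' --- you simply spell out the uniseriality and $\varphi_i$-stability arguments in full. Your caution about variance is warranted: with the stated conventions a larger subbimodule has smaller $t_i$'s (e.g.\ $\Theta_0=\mathrm{id}$ is the maximum of $\mathcal{C}_n$ while $\Theta_I$ is the constant map $1$, the minimum), so $\Theta$ is order-reversing from $(\mathfrak{I},\subseteq)$ to $(\mathcal{C}_n,\leq)$, i.e.\ an isomorphism only after reversing one of the two orders --- a point the paper's one-line justification glosses over.
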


\begin{proof} 
Injectivity of $\Theta$ follows directly from construction. Surjectivity follows from the observation that,
choosing any $t_i$'s satisfying $t_i\leq i$ we can define $X\cap I_{\mathtt{i}}$ as a unique submodule of
$I_{\mathtt{i}}$ such that $L_{{\mathtt{t}}_{\mathtt{i}}}$ is the socle of 
$I_{\mathtt{i}}/(X\cap I_{\mathtt{i}})$ and under the condition $t_i\leq t_{i+1}$ the space
$X:=\bigoplus_{{\mathtt{i}}\in\mathbf{Q}}X\cap I_{\mathtt{i}}$ becomes a subbimodule of $I$. 
That $\Theta$ is a homomorphism 
of posets is straightforward by construction.
\end{proof}

\subsection{Another interpretation of $1$-morphisms in $\cG_n$}\label{s4.7}

Proposition~\ref{prop23} allows for another interpretation of $1$-morphisms in $\cG_n$. For $f\in\mathcal{C}_n$
consider the subbimodule $X:=\Theta^{-1}(f)$ in $I$. Let $f=\alpha_{i_1}\alpha_{i_2}\cdots\alpha_{i_k}$ be some
decomposition of $f$ into a product of generators in $\mathcal{C}_n$. Set 
$\mathbf{i}:=(\mathtt{i}_1,\mathtt{i}_2,\dots,\mathtt{i}_k)$.

\begin{proposition}\label{prop24}
We have $\mathrm{G}_{\mathbf{i}}\cong \mathrm{Hom}_{\mathcal{A}_{\mathbf{Q}}^{\Bbbk}}((I/X)^*,{}_-)$. 
\end{proposition}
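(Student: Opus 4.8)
The plan is to split the proof into two logically independent steps and then combine them. Step one is a representability statement, valid for any reduced word, asserting that the composition is recovered from its value on the injective cogenerator: $\mathrm{G}_{\mathbf{i}}\cong\mathrm{Hom}_{\mathcal{A}_{\mathbf{Q}}^{\Bbbk}}((\mathrm{G}_{\mathbf{i}}\,I)^*,{}_-)$. Step two is the combinatorial identification $\mathrm{G}_{\mathbf{i}}\,I\cong I/X$ of that value with the quotient attached by $\Theta$ to $f=\alpha_{i_1}\cdots\alpha_{i_k}$, i.e. $X=\Theta^{-1}(f)$. Granting both, substituting the second isomorphism into the first gives exactly the assertion.

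For step one, note that $\mathrm{G}_{\mathbf{i}}$ is a composition of the exact functors $\mathrm{G}_{\mathtt{i}_l}$ (Proposition~\ref{exact}), hence exact, and that $\mathrm{Hom}_{\mathcal{A}_{\mathbf{Q}}^{\Bbbk}}((\mathrm{G}_{\mathbf{i}}\,I)^*,{}_-)$ is left exact; so both functors are left exact. Since $\mathcal{A}_{\mathbf{Q}}^{\Bbbk}\text{-}\mathrm{mod}$ is hereditary, every module admits an injective coresolution of length one, and a left exact functor $F$ satisfies $FM=\ker(FI_0\to FI_1)$ for any coresolution $M\hookrightarrow I_0\tto I_1$; thus a left exact functor is determined up to isomorphism by its restriction to $\mathcal{I}_{\mathcal{A}_{\mathbf{Q}}^{\Bbbk}}=\mathrm{add}(I)$. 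On the composition side, combining the isomorphisms $\mathrm{G}_{\mathtt{i}_l}\vert_{\mathcal{I}_{\mathcal{A}_{\mathbf{Q}}^{\Bbbk}}}\cong\mathrm{F}_{\mathtt{i}_l}\vert_{\mathcal{I}_{\mathcal{A}_{\mathbf{Q}}^{\Bbbk}}}$ (each $\mathrm{G}_{\mathtt{i}_l}$ preserving injectives) gives $\mathrm{G}_{\mathbf{i}}\,I\cong\mathrm{F}_{\mathbf{i}}\,I$, where $\mathrm{F}_{\mathbf{i}}:=\mathrm{F}_{\mathtt{i}_1}\circ\cdots\circ\mathrm{F}_{\mathtt{i}_k}$. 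On the Hom side, the very computation of Subsection~\ref{s2.3}, carried out for $B:=\mathrm{G}_{\mathbf{i}}\,I$ in place of $\mathrm{F}_{\mathtt{i}}\,I$, yields $\mathrm{Hom}_{\mathcal{A}_{\mathbf{Q}}^{\Bbbk}}(B^*,I)\cong B$ compatibly with $\mathrm{End}_{\mathcal{A}_{\mathbf{Q}}^{\Bbbk}}(I)$. As $I$ is an additive generator of $\mathcal{I}_{\mathcal{A}_{\mathbf{Q}}^{\Bbbk}}$ and both functors are additive and $\Bbbk$-linear, this single compatible isomorphism extends to a natural isomorphism of the two restrictions, hence of the two functors.

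For step two I would compute $B=\mathrm{F}_{\mathbf{i}}\,I$ explicitly. Because each $\mathrm{F}_{\mathtt{i}}=\mathrm{Id}/\mathrm{Tr}_{L_{\mathtt{i}}}$ depends only on the left module structure and $\mathrm{Tr}_{L_{\mathtt{i}}}$ is a subfunctor of the identity, $\mathrm{F}_{\mathbf{i}}\,I$ is a bimodule quotient $I/X$ with $X\in\mathfrak{I}$, and as a left module it splits as $\bigoplus_{\mathtt{j}}\mathrm{F}_{\mathbf{i}}\,I_{\mathtt{j}}$. Each $I_{\mathtt{j}}$ is uniserial with socle $L_{\mathtt{j}}$, and on a uniserial module with socle $L_{\mathtt{s}}$ the functor $\mathrm{F}_{\mathtt{i}}$ removes the socle if $s=i$ and is the identity otherwise. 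Writing $\beta_i(s):=s-1$ for $s=i$ and $\beta_i(s):=s$ otherwise, the socle index of $\mathrm{F}_{\mathbf{i}}\,I_{\mathtt{j}}$ is $\beta_{i_1}\cdots\beta_{i_k}(j)$, with the value $0$ read as the zero module. The point is that the shift $s\mapsto s+1$ intertwines $\beta_i$ with $\alpha_i$, whence $\beta_{i_1}\cdots\beta_{i_k}(j)=f(j+1)-1$. Thus the invariants $t_j$ attached to $X$ in Subsection~\ref{s4.6} equal $f(j+1)-1$, giving $\Theta_X(i)=f(i)$ for all $i$, and therefore $X=\Theta^{-1}(f)$ by Proposition~\ref{prop23}.

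I expect the main obstacle to be the bookkeeping of step two: one must check that the purely left-module record of socle indices really pins down the subbimodule $X$ and not merely a left submodule (this is why $\mathrm{F}_{\mathbf{i}}\,I$ must be seen as a bimodule quotient and each $I_{\mathtt{j}}$ is uniserial with distinct socle), and that the intertwining $s\mapsto s+1$ matches $\beta_i$ with $\alpha_i$ in all boundary cases, notably $s=i=1$ (a killed summand) and $f(1)=1$. A convenient feature is that independence of the chosen decomposition of $f$ is then automatic, since $\beta_{i_1}\cdots\beta_{i_k}(j)=f(j+1)-1$ depends on $\mathbf{i}$ only through $f$, requiring no further appeal to the defining relations beyond the identification $\mathcal{S}[\cG_n]\cong\mathcal{C}_n$ already established.
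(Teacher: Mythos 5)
Your proposal is correct and follows essentially the same route as the paper's proof: both reduce the claim to comparing two left exact functors on the injective cogenerator $P^*=I$, use the adjunction computation of Subsection~\ref{s2.3} to identify the $\mathrm{Hom}$-functor's value there with the bimodule itself, and then pin down the bimodule quotient $\mathrm{G}_{\mathbf{i}}\,I$ of $I$ by its left-module data via the rigidity of subbimodules of $I$ (Proposition~\ref{prop23}). The only difference is that you carry out the uniserial socle bookkeeping for $\mathrm{F}_{\mathbf{i}}\,I_{\mathtt{j}}$ explicitly (including the intertwining of $\beta_i$ with $\alpha_i$), where the paper simply cites the multiplicity matrices from Theorem~\ref{thmmain}\eqref{thmmain.4} together with the definition of $\Theta$.
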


\begin{proof}
As $P^*$ is an injective cogenerator of $\mathcal{A}_{\mathbf{Q}}^{\Bbbk}\text{-}\mathrm{mod}$, 
it is enough to check that there is a natural isomorphism
$\mathrm{Hom}_{\mathcal{A}_{\mathbf{Q}}^{\Bbbk}}((I/X)^*,P^*)\cong \mathrm{G}_{\mathbf{i}}\, P^*$.
For the left hand side we have the natural isomorphism
$\mathrm{Hom}_{\mathcal{A}_{\mathbf{Q}}^{\Bbbk}}((I/X)^*,P^*)\cong I/X$ as shown in Subsection~\ref{s2.3}.

To compute the right hand side we recall that from the definition of $\mathrm{G}_{\mathbf{i}}$ we have
a natural transformation from the identity functor to $\mathrm{G}_{\mathbf{i}}$ which is surjective on
injective modules. Therefore it is enough to check that $\mathrm{G}_{\mathbf{i}}\, P^*\cong I/X$ as a left
module. Since $\mathrm{G}_{\mathbf{i}}$ maps injectives to injectives, it is enough to check that the
multiplicities of indecomposable injectives in $\mathrm{G}_{\mathbf{i}}\, P^*$ and $I/X$ agree. This follows
by comparing Theorem~\ref{thmmain}\eqref{thmmain.4} with the definition of $\Theta$ and 
Proposition~\ref{prop23}.
\end{proof}

\section{The category of $2$-morphisms and cell $2$-representations of $\cG_n$}\label{s5}

\subsection{The category of $2$-morphisms of $\cG_n$}\label{s5.1}

For every $f\in\mathcal{C}_n$ fix some indecomposable $1$-morphism $\mathrm{F}_f\in\cG_n$ such that 
$\Phi(f)=[\mathrm{F}_f]$ and set $X_f:=\Theta^{-1}(f)$. 

\begin{proposition}\label{prop71}
For $f,g\in \mathcal{C}_n$ we have $\cG_n(\mathrm{F}_f,\mathrm{F}_g)\cong 
\mathrm{Hom}_{\mathcal{A}_{\mathbf{Q}}^{\Bbbk}\text{-}\mathcal{A}_{\mathbf{Q}}^{\Bbbk}}(I/X_f,I/X_g)$.
The latter is nonzero if and only if $f\leq g$. If $f\leq g$, then 
$\mathrm{Hom}_{\mathcal{A}_{\mathbf{Q}}^{\Bbbk}\text{-}\mathcal{A}_{\mathbf{Q}}^{\Bbbk}}(I/X_f,I/X_g)$
is one-dimensional and is generated by the natural projection.
\end{proposition}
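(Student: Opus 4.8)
The plan is to compute the space of $2$-morphisms by co-representing each $1$-morphism and then invoking a Yoneda-type argument. By Proposition~\ref{prop24}, for any word $\mathbf{i}_f$ for $f$ we have $\mathrm{F}_f\cong\mathrm{Hom}_{\mathcal{A}_{\mathbf{Q}}^{\Bbbk}}((I/X_f)^*,{}_-)$, and likewise for $g$; in particular this functor depends only on $f$ and not on the chosen word, so $\mathrm{F}_f$ is legitimately indexed by $f$. First I would record that the left $\mathcal{A}_{\mathbf{Q}}^{\Bbbk}$-module structure on the output $\mathrm{Hom}_{\mathcal{A}_{\mathbf{Q}}^{\Bbbk}}((I/X_f)^*,N)$ is exactly the one induced by the \emph{right} module structure of the bimodule $(I/X_f)^*$. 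Since a $2$-morphism in $\cG_n$ is by definition a natural transformation of these functors regarded as endofunctors of $\mathcal{X}$, its components are honest morphisms of left modules and are natural in $N$.

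The key step is then an enriched Yoneda computation. Ordinary Yoneda identifies natural transformations $\mathrm{Hom}_{\mathcal{A}_{\mathbf{Q}}^{\Bbbk}}((I/X_f)^*,{}_-)\to\mathrm{Hom}_{\mathcal{A}_{\mathbf{Q}}^{\Bbbk}}((I/X_g)^*,{}_-)$ of the underlying additive functors with left module homomorphisms $(I/X_g)^*\to(I/X_f)^*$, via precomposition. I would then check that the further requirement that every component be a homomorphism of \emph{left} modules, and not merely of vector spaces, translates precisely into the condition that the representing map $(I/X_g)^*\to(I/X_f)^*$ be right-linear as well: evaluating the naturality and left-linearity constraints on the identity element shows that left-linearity of all components forces right-linearity of the representing map, hence that it is a bimodule homomorphism. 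This yields $\cG_n(\mathrm{F}_f,\mathrm{F}_g)\cong\mathrm{Hom}_{\mathcal{A}_{\mathbf{Q}}^{\Bbbk}\text{-}\mathcal{A}_{\mathbf{Q}}^{\Bbbk}}((I/X_g)^*,(I/X_f)^*)$, and applying the $\Bbbk$-duality $(-)^*$, a contravariant self-equivalence of the category of bimodules, gives the asserted isomorphism $\cG_n(\mathrm{F}_f,\mathrm{F}_g)\cong\mathrm{Hom}_{\mathcal{A}_{\mathbf{Q}}^{\Bbbk}\text{-}\mathcal{A}_{\mathbf{Q}}^{\Bbbk}}(I/X_f,I/X_g)$.

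For the remaining assertions I would analyze these bimodule maps directly. By the argument proving Proposition~\ref{prop2}, the bimodule $I$ has simple top, so each $I/X_f$ is a cyclic bimodule with simple top. A nonzero homomorphism $\varphi\colon I/X_f\to I/X_g$ must send a generator to a generator: were the generator sent into the radical, then $I/X_f$ being generated by its top would force $\varphi=0$. Hence $\varphi$ is surjective; composing it with the canonical projection $I\to I/X_f$ produces a surjection $I\to I/X_g$, which, since $I/X_g$ has simple top and thus local endomorphism ring with residue field $\Bbbk$, is a scalar multiple of the canonical projection. Comparing kernels gives $X_f\subseteq X_g$ and identifies $\varphi$ with the natural projection up to a scalar; conversely $X_f\subseteq X_g$ produces this projection. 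Thus the Hom space is at most one-dimensional and is nonzero exactly when $X_f\subseteq X_g$, and translating this containment of subbimodules through the poset isomorphism of Proposition~\ref{prop23} expresses it as the order relation between $f$ and $g$, yielding the stated equivalence with $f\leq g$ and the description of the generator. The main obstacle I anticipate is the enriched Yoneda step: one must argue carefully that naturality with respect to the left module structure on the \emph{target} — rather than mere additivity — is precisely what promotes left-linear maps to genuine bimodule homomorphisms, so that the $2$-morphism space is governed by bimodule, and not merely left module, homomorphisms of the co-representing objects.
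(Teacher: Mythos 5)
Your overall architecture matches the paper's: identify the $2$-morphism space with a bimodule Hom-space via Proposition~\ref{prop24}, then analyse $\mathrm{Hom}_{\mathcal{A}_{\mathbf{Q}}^{\Bbbk}\text{-}\mathcal{A}_{\mathbf{Q}}^{\Bbbk}}(I/X_f,I/X_g)$ directly. Your Yoneda step is carried out in more detail than the paper (which only says the isomorphism ``follows from Proposition~\ref{prop24}'') and is correct: requiring the components of a natural transformation between the co-representing functors to be left-module maps is exactly the condition that the representing map be right-linear as well, and dualizing gives the asserted isomorphism.

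Two justifications in your second half do not hold up, however, and both are repaired by the one observation that drives the paper's proof and that you never invoke: the simple top $S$ of the bimodule $I$ is one-dimensional and has composition multiplicity $1$ in $I$, hence in every quotient of $I$. First, ``were the generator sent into the radical, then $I/X_f$ being generated by its top would force $\varphi=0$'' is not a valid inference: the image would then be the subbimodule generated by an element of the radical of $I/X_g$, which for a general pair of cyclic modules with simple top can perfectly well be nonzero. What actually forces a nonzero $\varphi$ to be surjective is multiplicity one: a nonzero image is a nonzero quotient of $I$ and so contains $S$ as a composition factor, while the radical of $I/X_g$ contains no copy of $S$ because the unique occurrence of $S$ in $I/X_g$ sits in the top. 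Second, ``local endomorphism ring with residue field $\Bbbk$'' does not imply that all surjections $I\to I/X_g$ are proportional: a local ring of the form $\Bbbk\,\mathrm{id}\oplus\mathrm{rad}$ has non-scalar automorphisms in general, and, more seriously, you have not shown that the kernel of your surjection equals $X_g$, which you need both to compare it with the canonical projection and to extract $X_f\subseteq X_g$. Multiplicity one again fixes this: any two nonzero maps $I\to I/X_g$ induce nonzero scalars on the one-dimensional top, so a suitable linear combination kills the top, is therefore non-surjective, and is therefore zero by the previous point; hence $\mathrm{Hom}(I,I/X_g)$ is one-dimensional, spanned by the canonical projection, and every nonzero map has kernel exactly $X_g$. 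With these repairs the remainder of your argument (restriction to $\mathrm{Hom}(I/X_f,I/X_g)$ and the translation of $X_f\subseteq X_g$ into $f\leq g$ via Proposition~\ref{prop23}) goes through as written.
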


\begin{proof}
The isomorphism  $\cG_n(\mathrm{F}_f,\mathrm{F}_g)\cong 
\mathrm{Hom}_{\mathcal{A}_{\mathbf{Q}}^{\Bbbk}\text{-}\mathcal{A}_{\mathbf{Q}}^{\Bbbk}}(I/X_f,I/X_g)$ follows from 
Proposition~\ref{prop24}.
The simple top of the bimodule $I$ has composition multiplicity $1$ in $I$ and hence in all its non-zero quotients.
This implies that any non-zero map $I/X\to I/Y$ is a projection. Such a projection exists if and only if 
$X\subset Y$, which implies the rest of the proposition.
\end{proof}

\subsection{Cell $2$-representations of $\cG_n$}\label{s5.2}

The $2$-category $\cG_n$ does not have any weak involution and hence is not a fiat $2$-category in the sense of
\cite{MM1}. Nevertheless, we can still construct, by brute force, cell $2$-representations of $\cG_n$ which are
similar to cell $2$-representations of fiat categories constructed in \cite{MM1}. From 
Theorem~\ref{thmmain} we know that the multisemigroup $\mathcal{S}[\cG_n]$ is in fact a semigroup isomorphic to
$\mathcal{C}_n$. In particular, $\mathcal{S}[\cG_n]$ is a $\mathcal{J}$-trivial monoid. This means that all
left and all right cells of $\mathcal{S}[\cG_n]$ are singletons. 
For $X\subset\mathbf{Q}$ we denote $\varepsilon_X:=\alpha_{i_1}\alpha_{i_2}\cdots\alpha_{i_k}$, where we have
$X=\{i_1<i_2<\cdots<i_k\}$. Then $\{\varepsilon_X:X\subset\mathbf{Q}\}$ coincides with the set 
$E(\mathcal{C}_n)$ of all idempotents in $\mathcal{C}_n$. 

Consider the principal $2$-representation $\cG_n(\mathcal{X},{}_-)$ of $\cG_n$ and let 
$\overline{\cG_n(\mathcal{X},{}_-)}$ be its abelianization (in the sense of \cite{MM1,MM2}). Objects in 
$\overline{\cG_n(\mathcal{X},\mathcal{X})}$ are diagrams of the form $\beta:\mathrm{F}\to\mathrm{F}'$ where
$\mathrm{F},\mathrm{F}'$ are $1$-morphisms and $\beta$ is a $2$-morphism; morphisms in 
$\overline{\cG_n(\mathcal{X},\mathcal{X})}$ are usual commutative diagrams modulo right homotopy; 
and the $2$-action of $\cG_n$ is defined componentwise.

Consider the Serre subcategory $\mathcal{Z}$ of $\overline{\cG_n(\mathcal{X},\mathcal{X})}$ generated by all
simple tops of indecomposable projective objects $0\to \mathrm{F}_{f}$, where $f<\varepsilon_X$. As the set of all
$f$ satisfying $f<\varepsilon_X$ is an ideal in $\mathcal{C}_n$, from Proposition~\ref{prop71} it follows that 
$\mathcal{Z}$ is invariant under the $2$-action of $\cG_n$. Consider the abelian quotient 
$\overline{\cG_n(\mathcal{X},\mathcal{X})}/\mathcal{Z}$ with the induced $2$-action of $\cG_n$.

By Proposition~\ref{prop71}, the image of the indecomposable projective object $0\to \mathrm{F}_{\varepsilon_X}$
in this quotient is both simple and projective and hence its additive closure, call it $\mathcal{Q}_X$, is equivalent to 
$\Bbbk\text{-}\mathrm{mod}$. Similarly to the above, $\mathcal{Q}_X$ is invariant under the $2$-action of $\cG_n$
and hence has the structure of a $2$-representation of $\cG_n$ by restriction. This is the cell $2$-representation 
of $\cG_n$ associated  to the regular left cell $\{\mathrm{F}_X\}$. It is easy to see that for $i\in X$
the $1$-morphism $\mathrm{F}_{\alpha_i}$ acts on $\mathcal{Q}$ as the identity functor (up to isomorphism)
while for $i\not\in X$ the $1$-morphism $\mathrm{F}_{\alpha_i}$ acts on $\mathcal{Q}$ as zero.
All $2$-morphisms between non-isomorphic $1$-morphisms become zero in $\mathcal{Q}_X$.

\vspace{1mm}

\noindent
Anna-Louise Grensing, Bergische Universit{\"a}t 
Wuppertal, Gaussstrasse 20, 42097, Wuppertal, GERMANY,
{\tt grensing\symbol{64}math.uni-wuppertal.de}
\vspace{0.1cm}

\noindent
Volodymyr Mazorchuk, Department of Mathematics, Uppsala University,
Box 480, 751 06, Uppsala, SWEDEN, {\tt mazor\symbol{64}math.uu.se};
http://www.math.uu.se/$\tilde{\hspace{1mm}}$mazor/.
\vspace{0.1cm}

\end{document}